\newtheorem{thm}{Theorem}[section]
\newtheorem{cor}[thm]{Corollary}
\newtheorem{lem}[thm]{Lemma}
\newtheorem{prop}[thm]{Proposition}
\theoremstyle{definition}
\newtheorem{defn}[thm]{Definition}
\theoremstyle{remark}
\newtheorem{rem}[thm]{Remark}
\numberwithin{equation}{section}
\newcommand{\Ff}{\mathbb{F}}
\newcommand{\Gal}{\textnormal{Gal}}
\newcommand{\Z}{\mathbb{Z}}
\newcommand{\F}{\mathcal{F}}
\newcommand{\I}{\mathcal{I}}
\newcommand{\Pp}{\mathbb{P}}
\newcommand{\Res}{\textnormal{Res}}
\newcommand{\ord}{\textnormal{ord}}
\newcommand{\Hh}{\mathcal{H}}
\newcommand{\G}{\mathcal{G}}
\newcommand{\Prob}{\textnormal{Prob}}
\newcommand{\supp}{\textnormal{supp}}
\title[Rational points on cyclic covers]{On the distribution of the rational points on cyclic covers in the absence of roots of unity
}
\author{Lior Bary-Soroker}
\address{School of Mathematical Science, Tel Aviv University, Ramat Aviv, Tel Aviv, 6997801, Israel }
\email{barylior@post.tau.ac.il}
\author{Patrick Meisner}
\address{School of Mathematical Science, Tel Aviv University, Ramat Aviv, Tel Aviv, 6997801, Israel }
\email{meisner@mail.tau.ac.il}
\begin{document}

\begin{abstract}
In this paper we study the number of rational points on curves in an ensemble of abelian covers of the projective line: Let $\ell$ be a prime, $q$  a prime power and consider the ensemble $\mathcal{H}_{g,\ell}$ of $\ell$-cyclic covers of $\Pp^1_{\Ff_q}$ of genus $g$.

We assume that $q\not\equiv 0,1\mod \ell$.
If $2g+2\ell-2\not\equiv0\mod (\ell-1){\rm ord}_\ell(q)$, then $\mathcal{H}_{g,\ell}$ is empty. Otherwise, the number of rational points on a random curve in $\mathcal{H}_{g,\ell}$ distributes as $\sum_{i=1}^{q+1} X_i$ as $g\to \infty$, where $X_1,\ldots, X_{q+1}$ are i.i.d.\ random variables taking the values $0$ and $\ell$ with probabilities $\frac{\ell-1}{\ell}$ and $\frac{1}{\ell}$, respectively.
The novelty of our result is that it works in the absence of a primitive $\ell$-th-root of unity, the presence of which was crucial in previous studies.
\end{abstract}

\maketitle

\section{Introduction}\label{intro}
For a given smooth projective curve $C$ defined over a finite field $\mathbb{F}_q$ of genus $g=g(C)$, the Hasse-Weil bound says that
\[
|\# C(\mathbb{F}_q) - (q+1)| \leq 2g \sqrt{q}.
\]
Here $C(\mathbb{F}_q)$ denotes the set of $\Ff_q$-rational points on $C$ and $q+1 = \# \mathbb{P}^1(\mathbb{F}_q)$. The problem we are after in this paper is the distribution of $\#C(\mathbb{F}_q)$ in families of covers of $\mathbb{P}^1_{\Ff_q}$ of genus $g$ in the limit $g\to \infty$.

Kurlberg and Rudnick \cite{KR09} study the ensemble of hyperelliptic curves $C$ of genus $g\to \infty$ under the assumption that $q$ is odd. They show that $\#C(\mathbb{F}_q) $ distributes as $\sum_{i=1}^{q+1} Y_i$, with $Y_1,\ldots, Y_{q+1}$ i.i.d.\ (independent and identically distributed) random variables taking the values $0,1,2$
with probabilities
$\frac{q}{2(q + 1)}$, $\frac{1}{q + 1}$, $\frac{q}{2(q+1)}$, respectively. (In \emph{loc.cit.}, a different normalization is used, the above formulation is similar to the one appearing in \cite[Theorem~1.1]{BDFL10a} in terms of the trace of Frobenius.)

The result of Kurlberg and Rudnick has been a subject to generalizations and extensions in many directions, see e.g.\ \cite{BJ, BDFL10b,CWZ}.
One direction of generalization is to consider abelian covers of $\mathbb{P}^1$: Artin-Schreier covers \cite{BDFLS12,BDFL16},
biquadratic covers \cite{LMM} in odd characteristic, cyclic \cite{BDFKLOW,BDFL10a,BDFL11, Meisner17,Xiong10} and abelian \cite{Meisner_arxiv} covers of exponent dividing $q-1$. The latter assumption is used in a crucial way in those papers: it implies that $\mathbb{F}_q$ contains a primitive root of unity of order equal to the exponent of the covers, which allows the application of Kummer theory.

To be more precise, for a prime power $q$, a non-negative  integer $g$, and a prime $\ell$
consider the moduli space \begin{equation}\label{moduli_space}
\Hh_{g,\ell} = \{\varphi \colon C \to \mathbb{P}^1_{\Ff_q} : \Gal(\Ff_q(C)/\Ff_q(\mathbb{P}^1_{\Ff_q})) \cong \Z/\ell\Z,\quad g(C)=g \},
\end{equation}
of $\ell$-cyclic covers of $\Pp^1_{\Ff_q}$. Here $\Ff_q(C)$ denotes the function field of $C$ and  $\Ff_q(\Pp^1_{\Ff_q}) = \Ff_q(X)$ is the field of rational functions.
The state-of-the-art result \cite[Theorem~1.3]{BDFKLOW} for primes $\ell$ not dividing $q$ says that if $\ell\mid q-1$ and if $C$ is  a random curve chosen uniformly from $\Hh_{g,\ell}$, then
\begin{equation}\label{eq:BDFKLOW}
\Prob\left(\#C(\Ff_q)=N\right) = \Prob\left(\sum_{i=1}^{q+1} Y_i = N\right) + O\left(\frac{1}{g}\right),
\end{equation}
as $g\to \infty$,
where  $Y_1,\ldots, Y_{q+1}$ are i.i.d.\ random variables taking the values $0,1$ and $\ell$ with probabilities $\frac{(\ell-1)q}{\ell(q+\ell-1)}$, $\frac{\ell-1}{q+\ell-1}$, and $\frac{q}{\ell(q+\ell-1)}$, respectively.

The primes $\ell$ to which \eqref{eq:BDFKLOW} may be applied are bounded by $q-1$; in particular, for $\Ff_2$ the result is empty, for $\Ff_3$ and $\Ff_5$ one must have $\ell = 2$, and the main term of \eqref{eq:BDFKLOW} coincides with that of \cite{KR09}. Of course, for larger fields (such as  $\Ff_7$) \eqref{eq:BDFKLOW} is more general.

Our main result treats $q\not\equiv 0,1\mod \ell$. Not every genus may be obtained in this case: If $n_q$ is the multiplicative order of $q$ modulo $\ell$, then the genus $g$ of an $\ell$-cyclic cover must satisfy the congruence
\begin{equation}\label{genus_congruence}
2g+2\ell -2 \equiv 0 \mod (\ell-1)n_q,
\end{equation}
see Proposition~\ref{genformprop1}.

\begin{thm}\label{thm:main}
Let $q$ be a prime power and  $\ell$ a prime such that $q\not\equiv 0,1\mod{\ell}$. Let $C$ be a random curve chosen uniformly from $\Hh_{g,\ell}$ (see \eqref{moduli_space}).
Then, as $g\to\infty$ satisfying the congruence relation \eqref{genus_congruence},
\begin{equation}\label{eq:Main}
\Prob\left( \#C(\Ff_q)=N\right) = \Prob\left(\sum_{i=1}^{q+1} X_i = N \right) +
O\left(q^{-\frac{(1-\epsilon)g}{\ell-1}}\right).
\end{equation}
Here the $X_1,\ldots, X_{q+1}$ are i.i.d.\ random variables taking the values $0$ and $\ell$ with probabilities $\frac{\ell-1}{\ell}$ and $\frac{1}{\ell}$, respectively.
\end{thm}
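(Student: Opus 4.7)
I would follow the character-sum strategy of \cite{KR09,BDFKLOW}, but with the standard Kummer input replaced by a twisted version accommodating the absence of $\mu_\ell$ in $\Ff_q$.

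First, the two-valued support $\{0,\ell\}$ of the $X_i$'s is forced locally. For $P\in\Pp^1(\Ff_q)$ and $\varphi\colon C\to\Pp^1$ in $\Hh_{g,\ell}$, the fibre $\varphi^{-1}(P)(\Ff_q)$ has size $0$, $1$, or $\ell$ according as $P$ is inert, ramified, or totally split; but a tame, totally ramified, cyclic degree-$\ell$ extension of $\Ff_q((t))$ requires $\mu_\ell\subset\Ff_q$, which is excluded by $q\not\equiv 1\Mod{\ell}$. Hence no $\Ff_q$-point of $\Pp^1$ can ramify, and each $P$ contributes $0$ or $\ell$. Writing $\mathbf{1}_P(C)=1$ iff $P$ splits totally in $C$, it suffices by the fact that the joint law of a Bernoulli vector is determined by its ``all-ones on a subset'' probabilities to show that, for every finite $S\subset\Pp^1(\Ff_q)$,
\[
\frac{1}{|\Hh_{g,\ell}|}\sum_{C\in\Hh_{g,\ell}}\prod_{P\in S}\mathbf{1}_P(C)=\frac{1}{\ell^{|S|}}+O\!\left(q^{-(1-\epsilon)g/(\ell-1)}\right).
\]

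To compute the left-hand side I would parametrize $\Hh_{g,\ell}$ via twisted Kummer theory. Set $F=\Ff_{q^{n_q}}$, the minimal extension of $\Ff_q$ containing $\mu_\ell$; over $F$ every cover becomes Kummer, i.e.\ has function field $F(x)(y)$ with $y^\ell=f(x)$ for some $\ell$-th-power-free $f\in F(x)$, and descent to $\Ff_q$ translates into the Frobenius congruence $\phi(f)\equiv f^{\,q}$ in $F(x)^\times/(F(x)^\times)^\ell$, where $\phi$ is the $q$-power Frobenius. Combined with Proposition~\ref{genformprop1}, this parametrizes $\Hh_{g,\ell}$ by $\Gal(F/\Ff_q)$-twisted effective divisors on $\Pp^1_F$ of degree fixed by \eqref{genus_congruence}. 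Fixing a generator $\sigma$ of $\Gal(\Ff_q(C)/\Ff_q(x))$ and writing $\text{Frob}_P=\sigma^{a_P(C)}$, Kummer theory over $F$ identifies $\omega^{a_P(C)}$ with the $\ell$-th power residue symbol $\bigl(\tfrac{f_C}{P}\bigr)_\ell\in F^\times$, where $\omega$ is a primitive $\ell$-th root of unity. Orthogonality then expands $\prod_{P\in S}\mathbf{1}_P(C)$ as $\ell^{-|S|}\sum_{\vec{j}\in(\Z/\ell\Z)^{S}}\bigl(\tfrac{f_C}{M_{\vec{j}}}\bigr)_\ell$ with $M_{\vec{j}}=\prod_{P\in S}P^{j_P}$; the $\vec{j}=0$ term gives the main contribution $\ell^{-|S|}$, while for each non-zero $\vec{j}$ I would control the resulting Kummer character sum over the twisted parametrizing family by Weil's theorem applied to the $L$-function of $\bigl(\tfrac{\cdot}{M_{\vec{j}}}\bigr)_\ell$, which over $F$ produces cancellation of order $q^{-g/(\ell-1)}$.

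The principal obstacle is the descent: the parametrizing family sits inside the $F$-Kummer family as the fixed locus of a twisted Frobenius action, and one must verify both that $|\Hh_{g,\ell}|$ has the expected leading asymptotic and that restricting the character sum to this twist-invariant locus does not degrade the Weil cancellation beyond a factor $q^{\epsilon g}$. In the presence of $\mu_\ell$ (the setting of \cite{BDFKLOW}) these two steps are essentially automatic; in our setting, making the twist explicit and controlling its effect on the character-sum cancellation is the technical heart of the argument.
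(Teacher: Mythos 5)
Your strategy---base change to $\Ff_{q^{n_q}}$ so Kummer theory applies, descend back via the Frobenius compatibility $\phi(f)\equiv f^q$ modulo $\ell$-th powers, expand splitting indicators by characters, and bound the non-principal terms via Weil---is exactly the one the paper follows. Several of your intermediate observations are correct and in places cleaner than the paper's: the local argument that a tame, totally ramified, cyclic degree-$\ell$ extension of $\Ff_q((t))$ requires $\mu_\ell\subset\Ff_q$ establishes Remark~\ref{genformrem} (no $\Ff_q$-point ramifies) more conceptually than the genus-formula route; and the reduction to ``all-split on a subset $S$'' probabilities is a valid inclusion--exclusion bookkeeping alternative to the paper's direct parametrization of configurations $(\epsilon_0,\dots,\epsilon_q)\in\mu_\ell^{q+1}$ in Section~\ref{sec:proof}.

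The problem is that what you label ``the technical heart of the argument'' and leave as an acknowledged obstacle is precisely the content that the paper supplies, and it is genuinely nontrivial. Making the twist explicit is the point of Section~\ref{classify}: the ray class group computation (Lemma~\ref{ltorlem}), the $\mu_\ell$-stable factorizations (Corollary~\ref{basefieldcor}), and the wreath-product Galois descent (Lemmas~\ref{basefieldlem2}, \ref{subclassifylem2}, Proposition~\ref{subclassifyprop}) convert the abstract Frobenius congruence on $f$ into a concrete $(\ell-1)$-to-$1$ parametrization of $\Hh_{g,\ell}$ by tuples $(f_1,\dots,f_{\ell-1})\in\F_{n_q}(D)$ of $n_q$-divisible squarefree polynomials over $\Ff_q$, together with $b\in\Ff_{q^{n_q}}^*/(\Ff_{q^{n_q}}^*)^\ell$. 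This explicit model is what makes the genus formula \eqref{genform}, the identity $N_x=N_x'$ (Proposition~\ref{numptsprop}), and the analytic formula (Proposition~\ref{numptscor}) available. Likewise, verifying that restricting to the twist-invariant locus does not spoil square-root cancellation is the content of Section~\ref{SetCount}: one must derive the Euler product \eqref{eulprod} for $\G_{\mathbf{w}}$ over primes of $\Ff_q[X]$ with $n_q\mid\deg P$, identify the resulting $L$-functions of $K'=\Ff_{q^{n_q}}(X)$ together with the analytic correction factors and $n_q$-th roots (Proposition~\ref{analcontprop}), and then apply the residue theorem to obtain \eqref{Fsize1}. As written, your proposal correctly identifies these as the necessary steps but provides no argument for them; since they constitute both the main novelty and the bulk of the paper, what you have is a sound plan rather than a proof.
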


In \eqref{eq:BDFKLOW} each of the $q+1$ random variables $Y_i$ models the number of rational points on $C$ lying above the $i$-th rational point $x_i$ of $\Pp^1(\Ff_q)$ (under some arbitrary order). The number of points is $0$ if $x_i$ is inert in $C$  (i.e., the fiber is irreducible as an $\Ff_q$-scheme), $1$ if ramified, and $\ell$ if totally split. The probabilities are derived from the probabilities for the point to have each of these splitting types.

In \eqref{eq:Main}, when $q\not\equiv 0,1\mod\ell$, 
the points of $\Pp^1(\Ff_q)$ behave similarly to \eqref{eq:BDFKLOW} except for the fact that they cannot be ramified, see Remark \ref{genformrem}. So each of these $q$ points contributes either $0$ or $\ell$. The probabilities are derived from the heuristic that the Frobenius element is a random element of $\Z/\ell\Z$, and the point splits if and only if the Frobenius is trivial.

The error term in \eqref{eq:Main} decays exponentially in $g$ as the error term in \cite{KR09}, while in \eqref{eq:BDFKLOW} the error term decays linearly in $g$. In the latter results, the big error term comes from points of $\Pp^1(\Ff_q)$ that ramify in $C$, while in our setting the points  of $\Pp^1(\Ff_q)$ are always unramified in $C$.

We conclude the introduction with a few words on the methods and an outline of the proof. When $\ell \mid q-1$, the field $\Ff_q$ contains a primitive $\ell$-th root of unity $\zeta_\ell$. Hence the $\ell$-cyclic covers of $\Pp^1$ over $\Ff_q$ are birationally equivalent to a plane curve of the form
\begin{equation}\label{eq:SE}
Y^\ell = F(X), \qquad \mbox{$F$ is $\ell$-th-power free}.
\end{equation}
This leads to a parameterization of the moduli space $\Hh_{g,\ell}$. Then one derives an analytic formula for the number of rational points lying above a fixed point of $\Pp^1(\Ff_q)$ in terms of Dirichlet characters. The next step is to apply  generating function techniques to derive good asymptotic formulas for each  of the terms.

When $q\not\equiv0,1\mod\ell$, Equation~\eqref{eq:SE} generates a non-Galois and in particular non-abelian cover. We use the ray class group and explicit Galois descent to obtain a parametrization of the $\ell$-cyclic Galois covers (Section~\ref{classify}) together with a genus formula (Section~\ref{genformsec}). We control the number of rational points lying over the points of $\Pp^1(\Ff_q)$ (Proposition~\ref{numptsprop}). From this point, we follow a similar path as described above: We derive an analytic formula (Proposition~\ref{numptscor}) and then use generating function techniques (Section~\ref{SetCount}) to prove Theorem~\ref{thm:main} (Section~\ref{sec:proof}).

\subsection*{Acknowledgments}
The authors thank Dan Haran for a useful conversation on wreath product actions on fields, Chantal David for crucial comments, and Will Sawin for finding a critical mistake in a previous version.

L.B.S.\ was partially supported by a grant of the Israel Science Foundation and by the Simons CRM Scholars program. P.M.\ has received funding from the European Research Council under the European Union's Seventh Framework Programme (FP7/2007-2013) / ERC grant agreement n$^{\text{o}}$ 320755.

\section{Classifying the Curves}\label{classify}

In this section we use explicit class field theory to classify the $\ell$-cyclic covers $C\to \Pp^1_{\Ff_q}$ which in terms of function fields corresponds to $\Z/\ell \Z$-extensions of $K=\Ff_q(X)$.

\subsection{Class Field Theory Preliminaries}\label{CFTprel}

We begin by recalling the basic objects and results of class field theory which we use. For more extensive background on this topic we refer the reader to \cite{CF} or to \cite{HM}, for a more specific treatment in the case of function fields.

For now, we allow $K$ to be an arbitrary global field and $L$ a tamely ramified abelian extension of $K$. Denote $\mathcal{D}(K)$, $\mathcal{D}(L)$ as the groups of divisors of $K$ and $L$, respectively. Define the conorm map on the set of prime divisors in $\mathcal{D}(K)$ by
\[
i_{L/K}(P) = \sum_{\mathfrak{P}|P}e(\mathfrak{P}/P)\mathfrak{P},
\]
where the sum is over all primes of $L$ dividing $P$. We then extend $i_{L/K}$ linearly to all of $\mathcal{D}(K)$.

For any effective divisor $\mathfrak{m}$ of $K$, denote
\[
\mathcal{D}_{\mathfrak{m}}(L) = \{D\in \mathcal{D}(L) : \supp(D)\cap \supp(i_{L/K}(\mathfrak{m})) = \emptyset\}.
\]

\begin{rem}
If $L=K$ then $i_{L/K}$ is the identity map and $\mathcal{D}_{\mathfrak{m}}(K)$ is the set of divisors of $\mathcal{D}(K)$ that are coprime to $\mathfrak{m}$.
\end{rem}

Let $P$ be a prime of $K$ that is unramified in $L$. Then there is a unique automorphism $\sigma_P \in \Gal(L/K)$, called the \textbf{Frobenius automorphism}, such that
\[
\sigma_P(x) \equiv x^{N(P)} \mod{\mathfrak{P}}
\]
for all $x\in \mathcal{O}_{\mathfrak{P}}$ and all places $\mathfrak{P}$ of $L$ lying above $P$, where $N(P)$ is the norm of $P$ and $\mathcal{O}_{\mathfrak{P}}$ is the local ring at $\mathfrak{P}$. Let $\mathfrak{m}$ be an effective divisor of $K$ such that $L/K$ is unramified outside of $\mathfrak{m}$ and define the \textbf{Artin map} as
\begin{center}
\begin{tabular}{ c c c c }
$A_{L/K}$ : & $\mathcal{D}_{\mathfrak{m}}(K)$ & $\to$ & $\Gal(L/K)$ \\
& $D$ & $\mapsto$ & $\prod_{P}\sigma_P^{\ord_P(D)},$
\end{tabular}
\end{center}
where the product is over all primes of $K$ dividing $D$.

Define the \textbf{ray} of $L$ modulo $\mathfrak{m}$ as
\[
\mathcal{P}_{\mathfrak{m}}(L) = \{(f) : f\in L^{*}, f\equiv 1 \mod{\mathfrak{P}^{\ord_{\mathfrak{P}}(i_{L/K}(\mathfrak{m}))}} \mbox{ for all places $\mathfrak{P}$ of $L$} \}
\]
where we use the notation $(f)$ for the principle divisor of $f$. Notice that $\mathcal{P}_{\mathfrak{m}}(L)$ is a subgroup of $\mathcal{D}_{\mathfrak{m}}(L)$.

We say that $\mathfrak{m}$ is a \textbf{modulus} for $L$ if $\mathcal{P}_{\mathfrak{m}}(L)\subset \ker(A_{L/K})$. In this case, we can view $A_{L/K}$ as taking elements from the ray class group: $\mathcal{C}\ell_{\mathfrak{m}}(L) := \mathcal{D}_{\mathfrak{m}}(L)/\mathcal{P}_{\mathfrak{m}}(L)$. The following are the class field theory results which we need, borrowed from \cite{HM}.

\begin{prop}
\item
\begin{enumerate}\label{CFTprop}

\item $\mathfrak{m}$ is a modulus of $L$ if and only if $L/K$ is unramified outside of $\supp(\mathfrak{m})$.
\item If $L/K$ is unramified outside of $\supp(\mathfrak{m})$ then $A_{L/K} : \mathcal{C}\ell_{\mathfrak{m}}(K)\to \Gal(L/K)$ is surjective.
\item Let $L_1,L_2/K$ be two finite abelian extensions with modulus $\mathfrak{m}$. Then $L_1=L_2$ if and only if $\ker(A_{L_1/K}) = \ker(A_{L_2/K})$.
\item For every effective divisor $\mathfrak{m}$ and finite index subgroup $H$ of $\mathcal{C}\ell_{\mathfrak{m}}(K)$ there exists a finite abelian extension $L$ of modulus $\mathfrak{m}$ such that $\ker(A_{L/K})=H$. Moreover, $n\mathcal{C}\ell_{\mathfrak{m}}(K) \subset H$ where $n$ is the exponent of $\Gal(L/K)$.
\end{enumerate}
\end{prop}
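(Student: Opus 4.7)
The plan is to derive each of the four parts by translating the divisor-theoretic formulation in the statement into the standard adelic language of global class field theory. The core correspondence underlying all four assertions is the bijection between finite abelian extensions $L/K$ of conductor dividing $\mathfrak{m}$ and finite-index subgroups of the ray class group $\mathcal{C}\ell_\mathfrak{m}(K)$, realized concretely by the Artin map $A_{L/K}$; the ray class group is itself a finite quotient of the idele class group $C_K$ by the standard unit group attached to $\mathfrak{m}$.

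For part (1), the forward direction is local. If $\mathcal{P}_\mathfrak{m}(K) \subset \ker(A_{L/K})$, then at any prime $P \notin \supp(\mathfrak{m})$ the local component of the Artin map depends only on $\ord_P$, which is exactly the criterion for $P$ to be unramified in $L$. The reverse direction is the conductor-discriminant theorem: if $L/K$ is unramified outside $\supp(\mathfrak{m})$ and tamely ramified inside it, the conductor of $L/K$ divides $\mathfrak{m}$, so $\mathcal{P}_\mathfrak{m}(K) \subset \ker(A_{L/K})$ by definition of the conductor. Part (2) is an application of Chebotarev density: Frobenius classes of primes unramified in $L$ generate $\Gal(L/K)$, and each such Frobenius lies in the image of $A_{L/K}$ by construction. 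Part (3) is the injectivity clause of the existence-uniqueness theorem: abelian extensions of $K$ of conductor dividing $\mathfrak{m}$ are determined by the subgroup of $\mathcal{C}\ell_\mathfrak{m}(K)$ cut out by $A_{L/K}$. For part (4), one invokes the existence theorem of class field theory to produce, for any finite-index $H \subset \mathcal{C}\ell_\mathfrak{m}(K)$, an abelian extension $L/K$ of modulus $\mathfrak{m}$ with $\ker(A_{L/K})=H$; the inclusion $n\mathcal{C}\ell_\mathfrak{m}(K) \subset H$ is then automatic from the isomorphism $\mathcal{C}\ell_\mathfrak{m}(K)/H \cong \Gal(L/K)$ having exponent $n$.

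The hardest step, and the only one that is not essentially formal, is the existence theorem invoked in (4). In the function-field setting it is proved either by Galois cohomology of the idele class group or via $L$-function techniques through the Brauer group; since all four parts are standard and the statement is an assembly of textbook results, the efficient route is to refer to \cite{HM}, where the proofs are written directly in the divisor-theoretic language used in the excerpt rather than the adelic one.
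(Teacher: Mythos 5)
The paper provides no proof of this proposition; it explicitly states that these are standard class field theory results ``borrowed from \cite{HM},'' and the same source is the one you end by citing. Your sketch correctly identifies the standard arguments (conductor of a tame extension, Chebotarev for surjectivity, the existence theorem, and the elementary exponent observation for the last clause), so the approach is essentially the same as the paper's, just with a useful outline added; the only quibble is that the step you attribute to the ``conductor-discriminant theorem'' is really just the definition of the conductor together with the fact that tame ramification forces each conductor exponent to be at most $1$.
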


\subsection{Abstract Parametrization of Geometric $\ell$-cyclic Extensions}\label{ltor}

From now on we restrict to the case $K=\Ff_q(X)$, $n=\ell$ a prime not dividing $q$, and we want to parametrize the set
\[
\mathcal{H}_{\mathfrak{m}} = \{ L/K \mbox{ $\ell$-cyclic Galois extension, unramified outside $\frak{m}$ and $L\cap \overline{\mathbb{F}}_q=\mathbb{F}_q$}\}.
\]
Let $n_q$ be the multiplicative order of $q$ modulo $\ell$. So, $n_q = [K(\mu_\ell):K]=[\Ff_q(\mu_{\ell}):\Ff_q]$ where $\mu_\ell$ is the group of $\ell$-th-roots of unity.

First we compute the ray class group in this setting.

\begin{lem}\label{ltorlem}
Let $\mathfrak{m}$ be an effective divisor of $K$, then the ray class group $\mathcal{C}\ell_{\mathfrak{m}}(K)$ is finite and
\[
\mathcal{C}\ell_{\mathfrak{m}}(K)/\ell \mathcal{C}\ell_{\mathfrak{m}}(K) \cong  \Z/\ell\Z \times \prod_{\substack{P\in \supp(\mathfrak{m}) \\ n_q|\deg(P)}} \Z/\ell\Z.
\]
\end{lem}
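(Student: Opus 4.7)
The strategy is to relate $\mathcal{C}\ell_\mathfrak{m}(K)$ to the ordinary divisor class group of $\Pp^1_{\Ff_q}$ (which is $\Z$ via the degree map) by a short exact sequence, and then reduce mod $\ell$. Concretely, I would first establish
\[
0 \longrightarrow (\mathcal{O}/\mathfrak{m})^{*}/\Ff_q^{*} \longrightarrow \mathcal{C}\ell_{\mathfrak{m}}(K) \xrightarrow{\deg} \Z \longrightarrow 0,
\]
where $(\mathcal{O}/\mathfrak{m})^{*} := \prod_{P\in \supp(\mathfrak{m})}(\mathcal{O}_P/P^{\ord_P(\mathfrak{m})})^{*}$. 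The key inputs for $K=\Ff_q(X)$ are: every degree-zero divisor on $\Pp^1$ is principal; weak approximation makes the residue map $\{f \in K^{*} : (f)\in \mathcal{D}_\mathfrak{m}(K)\} \twoheadrightarrow (\mathcal{O}/\mathfrak{m})^{*}$ surjective with kernel $\{f : f \equiv 1 \pmod{\mathfrak{m}}\}$; and the global units of $\Ff_q(X)$ are exactly $\Ff_q^{*}$.

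Next, I would apply $-\otimes_\Z \Z/\ell\Z$ to this sequence. Since $\Z$ is torsion-free the tensored sequence stays short exact, and as a sequence of $\Ff_\ell$-vector spaces it splits. This gives one $\Z/\ell\Z$ summand from the degree map and reduces the problem to computing $\bigl((\mathcal{O}/\mathfrak{m})^{*}/\Ff_q^{*}\bigr)/\ell$.

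For the local piece, CRT yields $(\mathcal{O}/\mathfrak{m})^{*}\cong \prod_P(\mathcal{O}_P/P^{\ord_P(\mathfrak{m})})^{*}$, and at each prime $P$ of degree $d$ the local unit group decomposes as $\Ff_{q^d}^{*}\times (1+P)/(1+P^{\ord_P(\mathfrak{m})})$. The one-units factor is a pro-$p$ group with $p=\mathrm{char}(\Ff_q)\ne \ell$, hence $\ell$-divisible and invisible mod $\ell$. Thus the mod-$\ell$ contribution at $P$ is $\Ff_{q^d}^{*}/\ell$, which equals $\Z/\ell\Z$ precisely when $\ell\mid q^d-1$, i.e.\ $n_q\mid d$, and is trivial otherwise. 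Under the standing hypothesis $q \not\equiv 1 \pmod{\ell}$ of the main theorem one has $\Ff_q^{*}/\ell=0$, so quotienting by $\Ff_q^{*}$ does not change the mod-$\ell$ quotient. Combining everything gives the claimed isomorphism.

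The delicate step is establishing the first exact sequence cleanly, in particular using weak approximation to surject onto $(\mathcal{O}/\mathfrak{m})^{*}$ and verifying that $\Ff_q^{*}\cap\{f \equiv 1 \pmod{\mathfrak{m}}\}=\{1\}$ for nonzero $\mathfrak{m}$. Once that is in place the rest is a routine local-global computation, and finiteness of the mod-$\ell$ quotient is immediate from the finiteness of each local factor.
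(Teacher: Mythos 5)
Your argument follows the same route as the paper's proof: split off the degree factor, use CRT to reduce to a single prime $P$, and compute the local unit group modulo $\ell$, observing that the one-units form a pro-$p$ group with $p\neq\ell$ and hence vanish. You are in fact slightly more careful than the paper, which identifies $\mathcal{C}\ell^0_{P}(K)$ with $(\Ff_q[X]/P)^*$ rather than with its quotient by $\Ff_q^*$; that slip is harmless for exactly the reason you note, namely that $\Ff_q^*/(\Ff_q^*)^\ell$ is trivial under the standing hypothesis $q\not\equiv 1 \bmod \ell$.
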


\begin{proof}
We may write $\mathcal{D}_{\mathfrak{m}} = \mathcal{D}^0_{\mathfrak{m}}(K) \times \Z$ with  $\mathcal{D}^0_{\mathfrak{m}}(K)$ the subgroup of divisors of degree $0$. Since $\mathcal{P}_{m}(K)\subseteq \mathcal{D}^0_{\mathfrak{m}}(K)$, we have
\[
\mathcal{C}\ell_{\mathfrak{m}}(K)/\ell \mathcal{C}\ell_{\mathfrak{m}}(K)  \cong \mathcal{C}\ell^0_{\mathfrak{m}}(K)/\ell \mathcal{C}\ell^0_{\mathfrak{m}}(K)  \times \Z/\ell\Z
\]
with $\mathcal{C}\ell^0_{\mathfrak{m}}(K)=\mathcal{D}^0_{\mathfrak{m}}/\mathcal{P}_{\mathfrak{m}}(K)$. So it suffices to prove that
\[
\mathcal{C}\ell^0_{\mathfrak{m}}(K)/\ell \mathcal{C}\ell^0_{\mathfrak{m}}(K) \cong   \prod_{\substack{P\in \supp(\mathfrak{m}) \\ n_q|\deg(P)}} \Z/\ell\Z.
\]

First, suppose $\mathfrak{m} = P$ for some prime $P$ of $K$. Since $\Ff_q[X]$ is a principle ideal domain,
we get that
\[
\mathcal{D}^0_{\mathfrak{m}}(K) = \{(f) : f \in K^*, \ord_P(f)=0\} = \{(f) : f \in K^*, f \not\equiv 0,\infty \mod{P}\}.
\]
Hence,
\begin{align*}
\begin{split}
\mathcal{C}\ell^0_{\mathfrak{m}}(K) = & \{(f) : f \in K^*, f \not\equiv 0 ,\infty \mod{P}\}/\{(f) : f \in K^*, f \equiv 1 \mod{P}\} \\
 \cong & \left(\Ff_q[X]/P\right)^* = \Ff^*_{q^{\deg(P)}}
\end{split}
\end{align*}

Since $\Ff^*_{q^{\deg(P)}}$ is cyclic, and its order is divisible by $\ell$ if and only if $n_q\mid \deg(P)$, we have
\begin{align}\label{ltoreq}
\mathcal{C}\ell^0_{\mathfrak{m}}(K)/\ell \mathcal{C}\ell^0_{\mathfrak{m}}(K) \cong \begin{cases}\Z/\ell\Z & n_q|\deg(P) \\ 0 & \mbox{otherwise.} \end{cases}
\end{align}

Likewise, if $\mathfrak{m} = eP$ for some positive integer $e$, then
\[
\mathcal{C}\ell_{\mathfrak{m}}(K) \cong \left(\Ff_q[X]/P^e\right)^* \cong \left(\Ff_q[X]/P\right)^*\times H
\]
for some $p$-group $H$ (where $p=\textnormal{char}(K)\neq \ell$). Hence \eqref{ltoreq} holds.

Finally, assume $\mathfrak{m} = \sum_{P}e_P P$ with  $e_P\geq 0$ and only finitely many nonzero. Then by the Chinese Remainder Theorem,
\[
\mathcal{D}^0_{\mathfrak{m}}(K) \cong \prod_{P\in \supp(\mathfrak{m})}\mathcal{D}^0_{e_PP}(K)
\quad \mbox{and}\quad
\mathcal{P}_{\mathfrak{m}}(K) \cong \prod_{P\in \supp(\mathfrak{m})}\mathcal{P}_{e_PP}(K)\]
and hence, by \eqref{ltoreq}
\[
\mathcal{C}\ell^0_{\mathfrak{m}}(K)/\ell \mathcal{C}\ell^0_{\mathfrak{m}}(K) = \prod_{\substack{P\in \supp(\mathfrak{m}) \\ n_q|\deg(P)}} \Z/\ell\Z,
\]
as needed.
\end{proof}

\begin{defn}\label{def:n-divisible}
We say a polynomial $F$ is \textbf{$n$-divisible} if
\begin{enumerate}
\item $n\mid \deg(P)$ for all $P\mid F$ and
\item $F$ is monic.
\end{enumerate}
\end{defn}

\begin{rem}
A trivial observation, which plays a crucial point in the establishment of the analytic formula below, is that if $f$ is $n$-divisible for $n>1$, then $f(x) \neq 0$ for all $x\in \Ff_q$.
\end{rem}

\begin{cor}\label{ltorcor}
For every effective divisor $\mathfrak{m}$ of $K$,
let $\Omega_1$ be the set of pairs $(F,b)$ with $F\in \Ff_q[X]$ an $n_q$-divisible polynomial with prime factors in ${\rm supp}(\mathfrak{m})$ which is not an $\ell$-th-power and $b\in \Ff_{q^{n_q}}^*$ both $F,b$ taken up to $\ell$-th-powers in the respective groups and let $\Omega_2$ be the set of  $\ell$-cyclic extensions $L/K$ of modulus $\mathfrak{m}$ such that $L\cap \bar{\Ff}_q=\Ff_q$. Then 
\[
|\Omega_1| = (\ell-1)|\Omega_2|.
\]
\end{cor}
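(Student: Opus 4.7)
The plan is to count the two sets independently and verify $|\Omega_1|=(\ell-1)|\Omega_2|$. For $\Omega_2$, I would use Proposition~\ref{CFTprop}: by parts (2)--(4), the kernel map $L \mapsto \ker(A_{L/K})$ is a bijection between $\ell$-cyclic extensions of modulus $\mathfrak{m}$ and index-$\ell$ subgroups of $\mathcal{C}\ell_{\mathfrak{m}}(K)$. Any index-$\ell$ subgroup automatically contains $\ell\mathcal{C}\ell_{\mathfrak{m}}(K)$, and by Lemma~\ref{ltorlem} the quotient $V:=\mathcal{C}\ell_{\mathfrak{m}}(K)/\ell\mathcal{C}\ell_{\mathfrak{m}}(K)$ is an $\Ff_\ell$-vector space of dimension $k+1$, where $k=\#\{P\in\supp(\mathfrak{m}):n_q\mid\deg P\}$. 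The number of hyperplanes in $\Ff_\ell^{k+1}$ is $\frac{\ell^{k+1}-1}{\ell-1}$, and that is the total number of $\ell$-cyclic extensions of modulus $\mathfrak{m}$ (dropping, for the moment, the geometricity condition).

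Next, I would identify the unique non-geometric $\ell$-cyclic extension as the constant field extension $\Ff_{q^\ell}K$. This extension is unramified everywhere, and its Artin map sends a prime $P$ to the Frobenius $x \mapsto x^{q^{\deg P}}$, i.e., to $\deg P \bmod \ell$. Under the splitting used in the proof of Lemma~\ref{ltorlem} between degree-zero divisors and the degree coordinate, this kernel is exactly the ``degree hyperplane'', a single index-$\ell$ subgroup of $V$. Subtracting it gives
\[
|\Omega_2|=\frac{\ell^{k+1}-1}{\ell-1}-1=\frac{\ell(\ell^k-1)}{\ell-1}.
\]

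For $|\Omega_1|$: since $q\not\equiv 1\Mod{\ell}$, the only $\ell$-th root of unity in $\Ff_q^*$ is $1$, so monic $n_q$-divisible polynomials with prime factors in $\supp(\mathfrak{m})$, modulo $\ell$-th powers of such polynomials, are classified by the exponent vector $(e_P\bmod\ell)$ over primes $P\in\supp(\mathfrak{m})$ with $n_q\mid\deg P$. This yields $\ell^k$ classes, from which excluding the class of $\ell$-th powers (the zero vector) leaves $\ell^k-1$ choices for $F$. The definition of $n_q$ gives $\ell\mid q^{n_q}-1$, so $\Ff_{q^{n_q}}^*/(\Ff_{q^{n_q}}^*)^\ell$ is cyclic of order $\ell$, contributing a factor of $\ell$ for $b$. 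Hence $|\Omega_1|=\ell(\ell^k-1)=(\ell-1)\cdot\frac{\ell(\ell^k-1)}{\ell-1}=(\ell-1)|\Omega_2|$.

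The only delicate step I anticipate is matching the kernel of the Artin map for $\Ff_{q^\ell}K$ with the ``degree hyperplane'' coming from the explicit $\Z\times\mathcal{D}^0_\mathfrak{m}(K)$ decomposition in the proof of Lemma~\ref{ltorlem}; everything else reduces to dimension counting in $\Ff_\ell^{k+1}$ and a direct enumeration of monic polynomials modulo $\ell$-th powers.
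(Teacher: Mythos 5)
Your proof is correct and follows essentially the same route as the paper: it invokes Proposition~\ref{CFTprop} to reduce to index-$\ell$ subgroups of $\mathcal{C}\ell_{\mathfrak{m}}(K)$, passes to the $\ell$-torsion quotient via Lemma~\ref{ltorlem}, counts hyperplanes in $\Ff_\ell^{k+1}$, removes the unique constant extension, and matches this against the direct enumeration of pairs $(F,b)$ up to $\ell$-th powers. The one cosmetic difference is that the paper phrases the hyperplane count as a $1$-to-$(\ell-1)$ correspondence between index-$\ell$ subgroups and nonzero elements of the quotient group and then cancels the $\ell-1$ degenerate pairs $(1,b)$ against the single non-geometric extension, whereas you compute $|\Omega_1|$ and $|\Omega_2|$ separately and check the arithmetic; your extra step identifying $\ker(A_{\Ff_{q^\ell}K/K})$ with the degree hyperplane is sound but not actually needed for the count.
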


\begin{proof}
By Proposition~\ref{CFTprop}, the $\ell$-cyclic extensions $L/K$ of modulus $\mathfrak{m}$ are in bijection with subgroups of index $\ell$ of $\mathcal{C}\ell_{\mathfrak{m}}(K)/\ell \mathcal{C}\ell_{\mathfrak{m}}(K)$. By Lemma~\ref{ltorlem}, the latter are in $1$-to-$(\ell-1)$ correspondence with non-zero elements of 
$$\Z/\ell\Z\times \prod_{\substack{P\in \supp(\mathfrak{m}) \\ n_q|\deg(P)}} \Z/\ell\Z.$$
We identify the first copy of $\Z/\ell\Z$ with $\Ff_{q^{n_q}}^* /(\Ff_{q^{n_q}}^* )^\ell$.

Now, a nonzero element $(b,(a_P))$ corresponds to the pair $(b,\prod_{P} P^{a_P})$ bijectively. To conclude the proof, we need to note that there is exactly one $\ell$-cyclic extension $L/K$ with $L\cap \bar{\Ff}_q \neq \Ff_q$ (namely, $L=\Ff_{q^\ell}$) and there are exactly $\ell-1$ pairs with $F$ an $\ell$-th-power, namely $(1,b)$, $b\in \Ff_{q^{n_q}}^*/(\Ff_{q^{n_q}}^*)^\ell$.
\end{proof}

\subsection{Explicit Correspondence}\label{basefield}
The correspondence described in Corollary~\ref{ltorcor} is given non-explicitly. In this section, we construct such a correspondence, using explicit Galois descent. We denote the to-be-constructed extension corresponding to $(F,b)$ by $L_{F,b}$.
The case $n_q=1$ is treated by Kummer theory:

\begin{prop}\label{kummerprop}
Assume $n_q=1$. Then $L_{F,b}=K(\sqrt[\ell]{bF})$.
\end{prop}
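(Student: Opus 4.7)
The plan is to use Kummer theory---available because $n_q=1$ forces $\mu_\ell\subset\Ff_q$---to identify $\ell$-cyclic extensions of $K$ with nontrivial order-$\ell$ subgroups of $K^*/(K^*)^\ell$ via $\langle [g]\rangle \leftrightarrow K(\sqrt[\ell]{g})$, and then match this parametrization with the Artin-map one given by Corollary~\ref{ltorcor}. Concretely, I will show that $(F,b)\mapsto K(\sqrt[\ell]{bF})$ is a well-defined, $(\ell-1)$-to-$1$, surjective map from $\Omega_1$ onto $\Omega_2$; this identifies $L_{F,b}$ with $K(\sqrt[\ell]{bF})$.

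First I would verify that $K(\sqrt[\ell]{bF})\in\Omega_2$. The degree is exactly $\ell$ because $bF$ is not an $\ell$-th power in $K^*$: $F$ is not an $\ell$-th power in $\Ff_q[X]$ by the definition of $\Omega_1$, and multiplying by a unit $b\in\Ff_q^*$ cannot change this. Geometricity, $K(\sqrt[\ell]{bF})\cap \bar\Ff_q=\Ff_q$, follows since $\bar\Ff_q^*$ is $\ell$-divisible (as $\ell\neq p$), while the $\Ff_q[X]$-factorization of $F$ refines to one in $\bar\Ff_q[X]$ with the same prime-power exponents, so $bF$ remains a non-$\ell$-th power in $\bar\Ff_q(X)$. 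For ramification, Kummer theory says the ramified primes are those $P$ with $v_P(bF)\not\equiv 0 \bmod \ell$; as $b$ is a global unit, these are exactly the finite primes dividing $F$, all of which lie in $\supp(\mathfrak{m})$ by hypothesis, with the infinity prime treated analogously via $v_\infty(bF)=-\deg F$.

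Next I would analyze the fibers. Well-definedness is immediate: replacing $F$ by $FG^\ell$ and $b$ by $bc^\ell$ only modifies the $\ell$-th root by $cG\in K^*$. By Kummer, $K(\sqrt[\ell]{bF})=K(\sqrt[\ell]{b'F'})$ iff $b'F'\equiv(bF)^k \bmod (K^*)^\ell$ for some $k\in(\Z/\ell\Z)^*$; comparing $P$-adic valuations at each finite prime forces the $\ell$-th-power-free monic parts of $F'$ and $F^k$ to agree, and then comparing leading scalars (using that an element of $K^*$ whose $\ell$-th power lies in $\Ff_q^*$ is itself in $\Ff_q^*$) forces $b'\equiv b^k$ modulo $(\Ff_q^*)^\ell$. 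For $F$ nontrivial, the $\ell-1$ values of $k$ give $\ell-1$ genuinely distinct classes in $\Omega_1$, because scalar multiplication by $k$ acts nontrivially on the nonzero exponent vector of $F$ modulo $\ell$. Thus the map is $(\ell-1)$-to-$1$, and since $|\Omega_1|=(\ell-1)|\Omega_2|$ by Corollary~\ref{ltorcor}, it is automatically surjective, yielding the claimed identification. The main obstacle is precisely this bookkeeping---matching the Kummer $(\Z/\ell\Z)^*$-orbits on $K^*/(K^*)^\ell$ with the $(\ell-1)$-fold redundancy in $\Omega_1$; the geometricity and ramification checks themselves are routine.
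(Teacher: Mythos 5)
Your proof is correct in substance, and it takes the same basic route as the paper—invoking Kummer theory—but the paper's own proof is a one-line citation to a textbook (Dummit--Foote, Proposition \S14.37), whereas you unpack the citation into a full verification. What you actually prove is the $n_q=1$ case of Proposition~\ref{subclassifyprop} combined with Corollary~\ref{ltorcor}: the map $(F,b)\mapsto K(\sqrt[\ell]{bF})$ is a well-defined $(\ell-1)$-to-$1$ map landing in $\Omega_2$, hence surjective by the counting identity. Since $L_{F,b}$ is \emph{defined} as the image of $(F,b)$ under the explicit correspondence being constructed, identifying the correspondence with $K(\sqrt[\ell]{bF})$ establishes the claim. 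Your fiber analysis is clean: valuation comparison pins down the $\ell$-th-power-free monic part, the leading-coefficient argument pins down $b$ modulo $(\Ff_q^*)^\ell$, and the action of $k\in(\Z/\ell\Z)^*$ on the nonzero exponent vector of $F$ gives exactly $\ell-1$ preimages. The geometricity argument is also correct, though terse: $bF$ not an $\ell$-th power in $\bar\Ff_q(X)$ is equivalent to $bF\notin\Ff_q^*\cdot(K^*)^\ell$, which is what prevents the constant extension $K\Ff_{q^\ell}$ from being a subfield.

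The one place where you wave your hands is the infinite place. Your phrase ``with the infinity prime treated analogously via $v_\infty(bF)=-\deg F$'' leaves open exactly the issue that matters: when $\deg F\not\equiv 0\pmod\ell$ (which is allowed for $n_q=1$, unlike the $n_q>1$ case where the genus computation in Section~\ref{genformsec} shows $\deg F_{\mathbf{v}_0}\equiv 0\pmod\ell$ automatically), the extension $K(\sqrt[\ell]{bF})$ ramifies at $\infty$, and membership in $\Omega_2$ then forces $\infty\in\supp(\mathfrak{m})$. This point is also left implicit in the paper—Lemma~\ref{ltorlem} is phrased for finite primes only—so you are inheriting an imprecision rather than introducing a new error, but since you chose to spell out the ramification locus you should either carry through the $\infty$ case or state explicitly that $\mathfrak{m}$ is taken to contain $\infty$ whenever $\deg F\not\equiv 0\pmod\ell$.
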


\begin{proof}
This is the main result of Kummer theory, cf.\   \cite[Proposition~\S14.37]{DF}.
\end{proof}

From now on  assume $n_q>1$. As mentioned above, the extension $K(\sqrt[\ell]{F})/K$ is not Galois and in particular not abelian.
To deal with this, we extend the base field in order to apply Kummer theory and then descend the extension (using wreath products) to describe the model over the original field.

Let $K' := \Ff_{q^{n_q}}(X)$. Then, by the definition of $n_q$, we have $\mu_{\ell} \subseteq  K'$. Here we can apply Kummer theory, but $F$ gives rise to many $\ell$-cyclic extensions which, in order to descend back to $K$, we need to describe explicitly together with the Galois action.

\begin{rem}\label{basefieldrem2}
One word on the terminology we use, since both $K$ and $K'$ are rational function fields in $X$, we shall use the term `prime polynomial' in $K$ (resp.\ $K'$) usually denoted by $P$ (resp.\ $\mathfrak{P}$) to indicate both a monic irreducible polynomial in $X$ with coefficients in $\Ff_q$ (resp.\  $\Ff_{q^{n_q}}$) and the corresponding prime of the field (which is not $X=\infty$).

We denote by $\phi$ the $q$-th-power Frobenius. We extend its action from $\Ff_{q^{n_q}}$ to $K'$ by letting it act trivially on $X$. So,
\begin{equation}\label{def:Frob}
\phi(a_nX^n+a_{n-1} X^{n-1} + \cdots + a_0 ) = a_n^qX^n+a_{n-1}^q X^{n-1} + \cdots + a_0^q
\end{equation}
and $\Gal(K'/K) = \left<\phi\right>$.
\end{rem}
Every $x\in \Ff_q$ is fixed by $\phi$, i.e., $x^q=x$. Thus,
\begin{equation}\label{eq:phiqx}
\phi(f)(x) = (f(x))^q, \qquad x\in \Ff_q, f\in \Ff_{q^{n_q}}[X].
\end{equation}

We begin with a lemma about factorization.

\begin{lem}\label{basefieldlem}

Let $P$ be a prime polynomial in $K$ and let $m=\gcd(n_q,\deg(P))$. Then there exist distinct prime polynomials $\mathfrak{P}_1,\dots,\mathfrak{P}_m$ in $K'$ such that
\[
P = \mathfrak{P}_1 \cdots \mathfrak{P}_m
\]
and
$\phi(\mathfrak{P}_i) = \mathfrak{P}_{i+1}$, where $\phi$ is as defined in \eqref{def:Frob} and with the convention that $\mathfrak{P}_{m+1}=\mathfrak{P}_1$.

\end{lem}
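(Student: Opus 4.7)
The plan is to establish the factorization of $P$ in $\Ff_{q^{n_q}}[X]$ via a degree count at the level of roots, and then to use Galois descent to show that $\phi$ cycles through the factors in a single orbit of length $m$.

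For the factorization, I would pick any root $\alpha \in \bar{\Ff}_q$ of $P$. Since $P$ is irreducible of degree $d := \deg(P)$ over $\Ff_q$, we have $\Ff_q(\alpha) = \Ff_{q^d}$, and therefore $\Ff_{q^{n_q}}(\alpha) = \Ff_{q^{\lcm(n_q,d)}}$. From the identity $\lcm(n_q,d)/n_q = d/\gcd(n_q,d) = d/m$ it follows that the minimal polynomial of $\alpha$ over $\Ff_{q^{n_q}}$ has degree exactly $d/m$; the same applies to every root of $P$, all of which are distinct since $\Ff_q$ is perfect. Thus $P$ factors in $\Ff_{q^{n_q}}[X]$ as a product of $m$ distinct monic irreducible polynomials $\mathfrak{P}_1, \ldots, \mathfrak{P}_m$, each of degree $d/m$.

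For the Frobenius action, the automorphism $\phi$ of $K'$ fixes $X$ and the coefficients of $P$, and so permutes the set $\{\mathfrak{P}_1, \ldots, \mathfrak{P}_m\}$. I would fix an arbitrary factor, call it $\mathfrak{P}_1$, and let $k$ be the length of its $\langle \phi \rangle$-orbit; the product $Q := \prod_{i=0}^{k-1} \phi^i(\mathfrak{P}_1)$ is then monic and $\phi$-invariant, and by Galois descent (using $\Ff_q = \Ff_{q^{n_q}}^{\langle \phi \rangle}$) it has coefficients in $\Ff_q$. As $Q$ is a nontrivial monic divisor in $\Ff_q[X]$ of the irreducible polynomial $P$, we get $Q = P$ and hence $k \cdot (d/m) = d$, forcing $k = m$. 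Relabeling $\mathfrak{P}_{i+1} := \phi(\mathfrak{P}_i)$ for $i = 1, \ldots, m-1$ then yields the lemma, with the convention $\mathfrak{P}_{m+1} = \mathfrak{P}_1$. No step is a genuine obstacle; the delicate points are the arithmetic identity $\lcm(n_q,d)/n_q = d/m$ and the Galois descent step, where the hypothesis that $K'/K$ is Galois with $\Gal(K'/K) = \langle \phi \rangle$ of order $n_q$ is essential.
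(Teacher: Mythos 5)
Your proof is correct, and it takes a genuinely different (and more self-contained) route than the paper's. The paper argues at the level of places of the function fields: it invokes the fact that $K'/K$ is a cyclic unramified extension, so $P$ splits into distinct primes with $\Gal(K'/K)=\langle\phi\rangle$ acting transitively on them, and then it counts the number of factors via the tensor-product decomposition
\[
\Ff_{q^{n_q}}[X]/P = \Ff_{q^{\deg(P)}}\otimes_{\Ff_q}\Ff_{q^{n_q}} \cong \prod_{i=1}^{m}\Ff_{q^{n_q\deg(P)/m}}.
\]
You instead work directly with roots and minimal polynomials: you show each root of $P$ generates $\Ff_{q^{\lcm(n_q,d)}}$ over $\Ff_{q^{n_q}}$, hence has minimal polynomial of degree $d/m$, which gives the count $m$ by separability; and rather than citing transitivity of the Galois group on primes, you prove it by hand via Galois descent, observing that the $\langle\phi\rangle$-orbit product $Q$ of a single factor is monic, $\phi$-invariant, hence lies in $\Ff_q[X]$ and divides the irreducible $P$, forcing $Q=P$. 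What you gain is an elementary argument needing no ramification theory or tensor products; what the paper's formulation gains is brevity, since it simply quotes a standard decomposition law for unramified Galois extensions. Both are correct, and your arithmetic identity $\lcm(n_q,d)/n_q = d/\gcd(n_q,d)$ and descent step are sound.
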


\begin{proof}

Since $K'/K$ is a cyclic unramified extension, each prime can be decomposed as
\[
P = \mathfrak{P}_1 \cdots \mathfrak{P}_g,
\]
with the $\mathfrak{P}_i$ pairwise distinct.
Since the Galois group $\Gal(K'/K) = \left<\phi\right>$ acts transitively on the primes above $P$, by relabeling the primes we may assume that $\mathfrak{P}_{i+1} = \phi(\mathfrak{P}_i)$. Finally,
\[
\Ff_{q^{n_q}}[X]/P = (\Ff_{q}[X]/P) \otimes \Ff_{q^{n_q}} = \Ff_{q^{\deg(P)}}\otimes \Ff_{q^{n_q}} = \prod_{i=1}^m \Ff_{q^{n_q\deg(P)/m}},
\]
whence $g=m$.
\end{proof}

We extend the above factorization of primes to $n_q$-divisible polynomials.

\begin{cor}\label{basefieldcor}

If $F$ is an $n_q$-divisible polynomial in $K$, then there exist coprime polynomials in $K'$, $F_1,\dots,F_{n_q}$ such that
\begin{align}\label{stablefact}
\begin{split}
F&=F_1\cdots F_{n_q} \\
F_{i+1} &= \phi(F_i)
\end{split}
\end{align}
where $\phi$ is defined in \eqref{def:Frob} and $F_{n_q+1}:=F_1$. We call \eqref{stablefact} a \textbf{$\mu_{\ell}$-stable factorization} and the $F_i$ the corresponding \textbf{$\mu_{\ell}$-stable factors}.

\end{cor}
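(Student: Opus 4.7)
The plan is to deduce this corollary directly from Lemma~\ref{basefieldlem} applied prime by prime to the factorization of $F$ in $\Ff_q[X]$. Since the statement is essentially a bookkeeping consequence of the lemma, I do not anticipate any real obstacle beyond keeping indices straight.

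First I would write $F = \prod_P P^{e_P}$ as a product of distinct monic prime polynomials $P$ in $K$, each occurring with multiplicity $e_P \geq 0$. Because $F$ is $n_q$-divisible, every such $P$ satisfies $n_q \mid \deg(P)$, so $\gcd(n_q, \deg(P)) = n_q$. Lemma~\ref{basefieldlem} then produces, for each $P \mid F$, a factorization $P = \mathfrak{P}_1^{(P)} \mathfrak{P}_2^{(P)}\cdots \mathfrak{P}_{n_q}^{(P)}$ into $n_q$ distinct primes of $K'$ with $\phi(\mathfrak{P}_i^{(P)}) = \mathfrak{P}_{i+1}^{(P)}$ (indices taken cyclically modulo $n_q$).

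Next, for each $i\in\{1,\dots,n_q\}$, I would define
\[
F_i := \prod_{P \mid F} \bigl(\mathfrak{P}_i^{(P)}\bigr)^{e_P} \in K'.
\]
Multiplying over $i$ gives $F_1\cdots F_{n_q} = \prod_P P^{e_P} = F$. Coprimality of the $F_i$ follows from the fact that the primes $\mathfrak{P}_i^{(P)}$ are pairwise distinct as both $P$ and $i$ vary: distinct $P$ in $K$ give disjoint sets of primes in $K'$ above them, while Lemma~\ref{basefieldlem} asserts that the $n_q$ primes above a fixed $P$ are themselves distinct. Finally, since $\phi$ is a ring automorphism of $K'$ acting on prime polynomials exactly as on the corresponding primes,
\[
\phi(F_i) = \prod_{P \mid F} \phi\bigl(\mathfrak{P}_i^{(P)}\bigr)^{e_P} = \prod_{P \mid F} \bigl(\mathfrak{P}_{i+1}^{(P)}\bigr)^{e_P} = F_{i+1},
\]
with the convention $F_{n_q+1}=F_1$. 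This establishes both properties in \eqref{stablefact} and completes the proof.
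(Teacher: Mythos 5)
Your proof is correct and takes essentially the same route as the paper: the paper's one-line proof sets $F_i = \prod_{P\mid F}\mathfrak{P}_i^{\ord_P(F)}$ with $\mathfrak{P}_i$ as in Lemma~\ref{basefieldlem}, and your argument simply spells out the bookkeeping (that $\gcd(n_q,\deg P)=n_q$, that the $\mathfrak{P}_i^{(P)}$ are pairwise distinct across both $i$ and $P$, and that $\phi$ cyclically permutes them) which the paper leaves implicit.
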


\begin{proof}
Setting $F_i = \prod_{P|F} \mathfrak{P}^{\ord_P(F)}_i$ with $\mathfrak{P}_i$ as in Lemmas \ref{basefieldlem} suffices.
\end{proof}

From now on we fix our pair $(F,b)$ and explicitly construct a $\Z/\ell\Z$ extension $L = L_{F,b}$. Since $F$ is taken up to $\ell$-th powers, we may assume that  $F$ is an $\ell$-th-power free polynomial in $K$ that is $n_q$-divisible.
We also fix notation: Let $F_1,\ldots, F_{n_q}$ be the $\mu_{\ell}$-stable factorization factors of $F$ as in \eqref{stablefact}, let $b_i=b^{q^{i-1}}$, and
\[
 u_i = \sqrt[\ell]{b_iF_i} ,
\]
for $i=1,\ldots, n_q$. We put
\[
E' = K(u_1,\ldots, u_{n_q}).
\]

Recall that the wreath product $A\wr B$ of two finite groups $A$ and $B$ is defined as the semi-direct product $A^B\rtimes B$ with $B$ acting on $A^B$ by permuting the indices. To be more precise, if for example $B =\Z/n_q\Z$ and $x=1\in B$ is a generator, then
\[
(-x) (v_{1}, \ldots, v_{n_q}) x = (v_{2}, \ldots, v_{n_q}, v_1).
\]

\begin{lem}\label{basefieldlem2}

Let $\zeta_\ell\in \mu_{\ell}$ be a primitive $\ell$-th-root of unity. Then
\begin{enumerate}

\item  $\Gal(E'/K')=\left(\Z/\ell\Z\right)^{n_q}$ with action given by
\begin{equation} \label{actionofv}
(h_1, \ldots, h_{n_q}).u_i = \zeta_{\ell}^{q^{i-1}h_i} u_i, \qquad i=1,\ldots, n_q.
\end{equation}

\item
The action of $\phi$ on $K'$ as defined in \eqref{def:Frob} may be lifted to an action on $E'$ by setting
\begin{equation}\label{actionofphi}
\phi(u_i) = u_{i+1},
\end{equation}
with our usual convention $u_{n_q+1} := u_1$.

\item The extension $E'/K$ is Galois with $\Gal(E'/K) \cong \Z/\ell\Z \wr \Z/n_q\Z$.

\item The fixed field $E$ of  $\left<\phi\right>$ in $E'$ is
\[
E = K(u_1+\cdots + u_{n_q}).
\]
\end{enumerate}

\end{lem}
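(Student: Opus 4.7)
The plan is to handle the four claims in order. For part (1), my strategy is to verify via Kummer theory that $[E':K']=\ell^{n_q}$, which reduces to showing that the classes of $b_1F_1,\dots,b_{n_q}F_{n_q}$ are independent in $K'^*/(K'^*)^{\ell}$. This I would do by a valuation argument: given a would-be relation $\prod (b_iF_i)^{a_i}=g^\ell$ and a prime $\mathfrak{P}$ of $K'$ dividing a specific $F_j$, the coprimality of the $F_i$'s from Corollary~\ref{basefieldcor} and the $\ell$-th-power-freeness of $F_j$ force $\ell\mid a_j$. Once the degree is known, \eqref{actionofv} is just the standard choice of Kummer generators for each cyclic factor (the exponent $q^{i-1}$ is a convenient twist that will make the descent step in (3) cleaner).

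For part (2), I would define $\phi$ on $E'$ by the formula in \eqref{actionofphi} and check well-definedness at the algebra-presentation level: $E'\cong K'[X_1,\dots,X_{n_q}]/(X_i^\ell-b_iF_i)$ by the degree count from (1), and the proposed $\phi$ sends $X_i^\ell-b_iF_i$ to $X_{i+1}^\ell-b_{i+1}F_{i+1}$, which is the next relation. The only subtle instance is the wrap-around $i=n_q$, where the equation $\phi(b_{n_q}F_{n_q})=b_1F_1$ boils down to $b^{q^{n_q}}=b$; this holds because $b\in\Ff_{q^{n_q}}$. Any such $K$-endomorphism of $E'$ is automatically an automorphism as $[E':K]<\infty$.

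For part (3), I would deduce Galois-ness from normality: every $K$-conjugate of each $u_i$ has the form $\zeta\,u_j$ for some $\zeta\in\mu_\ell\subset K'$ and some $j$, because the Galois closure of $K'/K$ permutes the polynomials $b_iF_i$ within the set $\{b_1F_1,\dots,b_{n_q}F_{n_q}\}$ thanks to \eqref{stablefact} and $b^{q^{n_q}}=b$. Once normality is established, the short exact sequence
\[
1\to\Gal(E'/K')\to\Gal(E'/K)\to\Gal(K'/K)\to 1
\]
is $1\to(\Z/\ell\Z)^{n_q}\to\Gal(E'/K)\to\Z/n_q\Z\to 1$, and part (2) provides a splitting via $\phi$. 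A direct computation of $\phi\,\sigma_{(h_1,\dots,h_{n_q})}\,\phi^{-1}$ using \eqref{actionofv} and \eqref{actionofphi} shows that $\phi$ cyclically permutes the $n_q$ copies of $\Z/\ell\Z$ (this is where the $q^{i-1}$ twist pays off, making the conjugation act by pure index-shift), identifying the extension with the wreath product.

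For part (4), I would compute the stabilizer of $\alpha:=u_1+\cdots+u_{n_q}$ in $\Gal(E'/K)$. Since $\phi(\alpha)=\alpha$ by construction, it suffices to show the stabilizer is contained in $\langle\phi\rangle$. Writing a general element as $\sigma_{(h_1,\dots,h_{n_q})}\phi^k$, its action on $\alpha$ yields a $K'$-linear combination $\sum_j \zeta_\ell^{q^{j-1}h_j}u_j$ of the monomials $u_1,\dots,u_{n_q}$, which form part of the standard $K'$-basis of $E'$ provided by Kummer theory. Comparing coefficients forces each $\zeta_\ell^{q^{j-1}h_j}=1$, hence every $h_j\equiv 0$, so the stabilizer is exactly $\langle\phi\rangle$ and $K(\alpha)=E$ by Galois correspondence. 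The main obstacle I anticipate is part (1): getting the multiplicative independence cleanly stated, since the mixing of the constants $b_i$ with the polynomials $F_i$ needs the valuation argument to dispose of both simultaneously.
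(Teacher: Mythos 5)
Your proposal is correct and follows essentially the same route as the paper: Kummer theory for $\Gal(E'/K')$ with a twisted choice of coordinates, an explicit lift of $\phi$ to $E'$, the conjugation computation identifying the wreath product, and a stabilizer computation for the fixed field. The minor variations (the valuation argument spelling out why the $b_iF_i$ are independent in $K'^*/(K'^*)^\ell$, defining $\phi$ on the presentation $K'[X_1,\dots,X_{n_q}]/(X_i^\ell-b_iF_i)$ rather than lifting arbitrarily and correcting, and computing the stabilizer of $u_1+\cdots+u_{n_q}$ directly rather than by contradiction) are all sound and, if anything, slightly cleaner in parts (1) and (4) than the paper's write-up.
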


\begin{proof}

For $(1)$, it is clear that $K'(u_1,\ldots, u_{n_q})\subseteq E'$ since $K'(u_j) = L_{e_j} \subseteq E'$. The other inclusion  is obvious since $L_{\mathbf{v}}\subseteq K'(u_1,\ldots, u_{n_q})$ for all $\mathbf{v}$. This proves that $E'=K'(u_1,\ldots, u_{n_q})$. Now, since the $F_i$-s are pairwise co-prime, we have that $\Gal(E'/K')=\left(\Z/\ell\Z\right)^{n_q}$. We may choose coordinates for $(\Z/\ell \Z)^{n_q}$ to have the stated action.

$(2)$ and $(3)$ are well known consequences of $(1)$, but we give a full account because we need the precise actions.

For $(2)$, lift $\phi$ in any way  to a map $\psi$ from $E'$ to its algebraic closure. Since
\[
\psi(u_i)^{\ell} = \psi(u_i^\ell) = \psi(b_iF_i) = \phi(b_iF_i) = b_{i+1}F_{i+1},
\]
we have that $\psi(u_i) = \zeta_{\ell}^{v_{i+1}} u_{i+1}$, for some tuple $(v_1,\ldots, v_{n_q})\in (\Z/\ell \Z)^{n_q}$ (as usual $v_{n_q+1}=v_1$). From this we conclude that $\psi\colon E'\to E'$ and in particular $E'/K$ is a Galois extension. To get the desired lift, we  change $\psi$ by an element of $\Gal(E'/K')$:
\[
(-v_{1},-v_2q^{-1},\ldots, -v_{n_q}q^{1-n_q})\psi,
\]
and this lift maps each $u_i$ to $u_{i+1}$. By abuse of notation, we denote the latter lift also by $\phi$.

For $(3)$, by $(1)$ and $(2)$, we have that
\[
\begin{split}
(\phi^{-1} (v_1,\ldots, v_{n_q}) \phi) (u_i) &= \phi^{-1} (v_1,\ldots, v_{n_q}) . u_{i+1}=\phi^{-1}\zeta_\ell^{q^{i}v_{i+1}} = \zeta_{\ell}^{q^{i-1}v_{i+1}} u_i\\
&= (v_2,\ldots, v_{n_q}, v_1).u_i.
\end{split}
\]
This shows that the Galois group $\Gal(E'/K)$ is the wreath product.

For $(4)$, put $u= \sum_{i=1}^n u_i$. Since  $K(u)\subseteq E$, by the fundamental theorem of Galois theory to show equality, it suffices to show that $K(u)$ is not fixed by any element $\sigma$ in $\Z/\ell\Z \wr \Z /n_q \Z$  which is not in $\left<\phi\right>$, which has the form $\sigma = ((h_1,\ldots, h_{n_q}), \phi^k)$ with $(h_1,\ldots, h_{n_q})$ a nonzero vector in $(\Z /\ell\Z)^{n_q}$. Assume by contradiction that $\sigma(u)\in K(u)$.
Now, by \eqref{actionofv} and since $u$ is fixed by $\phi$,
\[
\sigma (u) = \sum_{i}\zeta_\ell^{q^{i-1}h_i} u_i.
\]
Since $\sigma(u)$ is also fixed by $\phi$, all the powers of $\zeta_\ell$ must be  equal; i.e., $\sigma(u) = \zeta_{\ell}^a u$ with $a \equiv q^{i-1}h_i\mod\ell$ for all $i$. But then $\zeta_\ell^a \in K(u)$ and thus fixed by $\phi$, which implies that $a\equiv 0 \mod \ell$. So all $h_i \equiv0\mod\ell$, contradiction.
\end{proof}

Now that we have computed the Galois group, to study $\ell$-cyclic subextensions we need to find $\ell$-cyclic quotients of the wreath product.
\begin{lem}\label{subclassifylem1}
The wreath product $G=\Z/\ell\Z\wr\Z/n_q\Z$ has a unique $\ell$-cyclic quotient. More precisely, for $H\lhd G$ we have $G/H\cong \Z/\ell \Z$ if and only if $H = \tilde{H}\rtimes\Z/n_q\Z$, where
\[
\tilde{H} = \{ h = (h_1,\dots,h_{n_q}) : h_1+\dots+h_{n_q} \equiv 0 \mod{\ell}\}\leq \left(\Z/\ell\Z\right)^{n_q}.
\]

\end{lem}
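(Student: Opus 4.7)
The plan is to identify all $\ell$-cyclic quotients of $G=(\Z/\ell\Z)^{n_q}\rtimes\Z/n_q\Z$ by computing the abelianization $G^{ab}$ and using the fact that $\gcd(\ell,n_q)=1$ (since $n_q$ divides $\ell-1$).

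First, any surjective homomorphism $G\twoheadrightarrow\Z/\ell\Z$ factors through $G^{ab}=G/[G,G]$, so I would start by locating $[G,G]$. Writing $A=(\Z/\ell\Z)^{n_q}$ and $B=\Z/n_q\Z$, the quotient $G/A\cong B$ is abelian, hence $[G,G]\subseteq A$. On the other hand, for a generator $\phi\in B$ acting by cyclic shift on $A$, one has $[\phi,h]=\phi(h)-h$ (written additively in $A$). Thus $[G,G]$ equals the image of the $\Z$-linear map $T\colon A\to A$, $h\mapsto\phi(h)-h$.

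Next, I would identify this image with $\tilde H$. Telescoping shows each $\phi(h)-h$ has coordinate sum $0$, so $\mathrm{Im}(T)\subseteq\tilde H$. Conversely, given $(a_1,\dots,a_{n_q})\in\tilde H$, one solves the recursion $h_{i+1}-h_i=a_i$ (setting $h_1=0$); the last equation closes up precisely because $\sum a_i\equiv 0\bmod\ell$. Hence $[G,G]=\tilde H$. Since the cyclic-shift action of $B$ on $A$ preserves the sum, it descends to the trivial action on $A/\tilde H\cong\Z/\ell\Z$, giving
\[
G^{ab}\cong\Z/\ell\Z\times\Z/n_q\Z,
\]
where the first factor is the sum map $A\to\Z/\ell\Z$ and the second is the identity on $B$.

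Now because $n_q\mid\ell-1$ one has $\gcd(\ell,n_q)=1$, so the only index-$\ell$ subgroup of $G^{ab}$ is $\{0\}\times\Z/n_q\Z$. Pulling back to $G$ gives a unique normal subgroup $H\lhd G$ with $G/H\cong\Z/\ell\Z$, and this $H$ is precisely $\tilde H\rtimes\Z/n_q\Z$, the preimage of $\{0\}\times\Z/n_q\Z$. Conversely, it is straightforward to check directly that $\tilde H\rtimes\Z/n_q\Z$ is normal (since $\tilde H$ is $B$-stable) and that the corresponding quotient is $\Z/\ell\Z$ via the sum map, completing the characterization. The only mildly subtle step is the explicit identification $[G,G]=\tilde H$; everything else is formal once this is in hand.
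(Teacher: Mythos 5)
Your proposal is correct and follows essentially the same route as the paper: compute the commutator subgroup, identify it with $\tilde{H}$ (the paper does this by a ``direct computation'' plus the observation that the quotient by $\tilde{H}\rtimes\{0\}$ is abelian, while you make the same computation explicit via the map $T(h)=\phi(h)-h$), and then invoke $\gcd(\ell,n_q)=1$ so that $G^{\mathrm{ab}}\cong\Z/\ell\Z\times\Z/n_q\Z$ has a unique index-$\ell$ subgroup. The paper phrases this last step by noting $G^{\mathrm{ab}}\cong\Z/\ell n_q\Z$ is cyclic, but that is the same fact.
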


\begin{proof}
Let $G'$ be the commutator of $G$.
Direct computation shows that
\[
G' \leq \{(h,0) : h_1+\dots+h_{n_q} = 0\}
\]
and since the quotient by the right hand side is abelian we have an equality.
Here $h=(h_1,\dots,h_{n_q})\in\left(\Z/\ell\Z\right)^{n_q}$.  Hence we have the isomorphism
\[
\begin{split}
G/G' &\to \Z/\ell\Z\times\Z/n_q\Z \cong \Z/\ell n_q\Z \\
(h,\sigma)&\mapsto \Big(\sum_j h_j, \sigma\Big)
\end{split}
\]
which implies that $G/G'$ has a unique $\ell$-cyclic quotient as a cyclic group, and so does  $G$. Moreover this quotient is given by $(h,\sigma)\mapsto \sum_j h_j$, and so  the kernel  is $\tilde{H}\rtimes \Z/n_q\Z$.
\end{proof}

We enumerate all the $\ell$-cyclic subextensions of $E'/K'$ by nonzero vectors  $\mathbf{v}=(v_1,\dots,v_{n_q})$ of elements in $\mathbb{Z}/\ell \mathbb{Z}$. We write
\begin{equation}\label{eq:DefFv}
F_{\mathbf{v}} := \prod_{i=1}^{n_q} (b_iF_i)^{v_i'},
\end{equation}
where  $0\leq v_i'\leq \ell-1$ is a minimal non-negative representative of $v_i$, $i=1,\ldots, n_q$ and we let
\begin{equation}\label{eq:DefLv}
L_{\mathbf{v}} = K'(\sqrt[\ell]{F_{\mathbf{v}}})
\end{equation}
be the corresponding Kummer extension.

\begin{lem}\label{subclassifylem2}
Let $\tilde{H}\leq H \leq \Z/\ell\Z\wr\Z/n_q\Z \cong \Gal(E'/K)$ be as in Lemma \ref{subclassifylem1} and let $L=E'^H$ and $\tilde{L}=E'^{\tilde{H}}$. Then,
\begin{align}
\label{Lv0}\tilde{L} &= L_{\mathbf{v_0}}, \\
\label{ExtensionofScalars}\tilde{L} &= L \Ff_{q^{n_q}},\quad \mbox{and}\\
\label{explicitequation}L &= \Ff_q(X)[Y]/\Bigg(\prod_{j=0}^{\ell-1} \Big( Y - \sum_{k=0}^{n_q-1} \zeta_{\ell}^{jq^k} \sqrt[\ell]{F_{\mathbf{v}_k}}\Big)\Bigg),
\end{align}
where  $\mathbf{v}_k =  (q^{k},q^{k-1},\dots,q^{k+1-n_q})$.
\end{lem}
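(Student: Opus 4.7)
My plan is to introduce explicit $\ell$-th roots $y_k$ of $F_{\mathbf v_k}$ inside $E'$, verify that $\langle\phi\rangle$ permutes them cyclically while $\tilde H$ fixes each of them, and then use $Y:=\sum_k y_k$ as a primitive element for $L/K$. Concretely, let $v_{k,i}'\in\{0,\ldots,\ell-1\}$ be the minimal non-negative residue of $q^{k+1-i}$ modulo $\ell$, so that the $\mathbf v_k$ from the statement have coordinates $v_{k,i}'$ and $F_{\mathbf v_k}=\prod_{i=1}^{n_q}(b_iF_i)^{v_{k,i}'}$. Set
\[
y_k:=\prod_{i=1}^{n_q}u_i^{v_{k,i}'}\in E',
\]
so that $y_k^\ell=F_{\mathbf v_k}$. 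Using \eqref{actionofv}, the element $\mathbf h=(h_1,\ldots,h_{n_q})$ acts on $y_k$ by the scalar $\zeta_\ell^{\sum_i q^{i-1}h_iv_{k,i}'}=\zeta_\ell^{q^k\sum_i h_i}$, so $y_k$ is $\tilde H$-fixed. Using \eqref{actionofphi} together with the integer equality $v_{k,i-1}'=v_{k+1,i}'$ (both equal the minimal residue of $q^{k+2-i}\bmod\ell$), I obtain $\phi(y_k)=y_{k+1}$ exactly, with cyclic indexing.

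For \eqref{Lv0}, the element $y_0$ lies in $E'^{\tilde H}=\tilde L$, and $F_{\mathbf v_0}$ is not an $\ell$-th power in $K'$: each prime $\mathfrak P\mid F_i$ appears in $F_{\mathbf v_0}$ with exponent $v_{0,i}'\cdot\ord_{\mathfrak P}(F_i)$, a product in $(\Z/\ell\Z)^*$. Hence $K'(y_0)=L_{\mathbf v_0}$ has degree $\ell$ over $K'$, matching $[\tilde L:K']=[(\Z/\ell\Z)^{n_q}:\tilde H]=\ell$, so $\tilde L=L_{\mathbf v_0}$. For \eqref{ExtensionofScalars}, by Galois correspondence inside $E'$ the compositum $LK'$ corresponds to $H\cap(\Z/\ell\Z)^{n_q}$, which equals $\tilde H$ since $H=\tilde H\rtimes\langle\phi\rangle$; hence $L\Ff_{q^{n_q}}=LK'=\tilde L$.

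For \eqref{explicitequation}, define $Y:=\sum_{k=0}^{n_q-1}y_k$. Then $Y$ is fixed by $\tilde H$ (summand-wise) and by $\phi$ (which cyclically permutes the summands), hence by $H$, so $Y\in L$. Let $\sigma$ be the class in $G/H$ of $(1,0,\ldots,0)\in(\Z/\ell\Z)^{n_q}$; this generates $G/H\cong\Z/\ell\Z$ since its coordinate sum is $1\not\equiv 0$. By the first paragraph, $\sigma.y_k=\zeta_\ell^{v_{k,1}'}y_k=\zeta_\ell^{q^k}y_k$, so $\sigma^j Y=\sum_k\zeta_\ell^{jq^k}y_k$. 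The $y_k$ are $K'$-linearly independent in $E'$ (the tuples $(v_{k,i}')_i$ are pairwise distinct as $k$ varies because $q$ has order $n_q$ mod $\ell$), and since $\gcd(q,\ell)=1$ the $\ell$ conjugates $\sigma^j Y$ are pairwise distinct. Hence $Y\notin K$, and $K(Y)=L$ by primality of $[L:K]=\ell$; the minimal polynomial of $Y$ over $K$ is then the product displayed in \eqref{explicitequation}.

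The main obstacle is the index bookkeeping in the first paragraph: the choice $v_{k,i}'=q^{k+1-i}\bmod\ell$ is precisely calibrated so that the action of $\mathbf h\in(\Z/\ell\Z)^{n_q}$ on $y_k$ collapses to the single character $\mathbf h\mapsto\zeta_\ell^{q^k\sum h_i}$—this collapse is exactly what makes the ``sum-zero'' subgroup $\tilde H$ from Lemma~\ref{subclassifylem1} appear as the stabilizer—and it is also what forces $\phi$ to shift $k\mapsto k+1$ exactly rather than merely up to roots of unity, so that $Y$ is honestly $\phi$-fixed.
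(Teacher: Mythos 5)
Your proof is correct and follows the same route as the paper: you realize $\sqrt[\ell]{F_{\mathbf v_k}}$ as the explicit monomial $y_k=\prod_i u_i^{v_{k,i}'}$, show $\tilde H$ fixes each $y_k$ and $\phi$ shifts $y_k\mapsto y_{k+1}$, identify $\tilde L=K'(y_0)$, and pass to the trace $Y=\sum_k y_k$ whose $(j,0,\ldots,0)$-conjugates give the displayed polynomial. You supply a bit more detail than the printed proof on two points the paper takes for granted — that $F_{\mathbf v_0}$ is not an $\ell$-th power in $K'$ (so $[L_{\mathbf v_0}:K']=\ell$) and that the conjugates $\sigma^jY$ are pairwise distinct (via $K'$-linear independence of the monomials $y_k$) — but the underlying argument is the same.
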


\begin{proof}
By \eqref{eq:DefFv}, \eqref{actionofv}, and since $q^{j-1}v_j\equiv 1\mod \ell$,  we have
\[
h\left(\sqrt[\ell]{F_{\mathbf{v}_0}}\right) = \prod_{j=1}^{n_q} \zeta_\ell^{q^{j-1}h_jv_j} \sqrt[\ell]{F_{\mathbf{v}_0}} = \zeta_\ell ^{\sum_{j=1}^{n_q}h_j } \sqrt[\ell]{F_{\mathbf{v}_0}} = \sqrt[\ell]{F_{\mathbf{v}_0}},
\]
for all $h=(h_1,\ldots, h_{n_q})\in \tilde{H}$
Therefore, $L_{\mathbf{v}_0} \subseteq \tilde{L}$. On the other hand, $[L_{\mathbf{v}_0}:K']=\ell = [\tilde{L}:K']$, hence $\tilde{L}=L_{\mathbf{v}_0}$.

Since $\tilde{H} = H\cap (\Z/\ell\Z)^{n_q}$ and $(\Z/\ell\Z)^{n_q} \cong \Gal(E'/K')$, the fundamental theorem of Galois theory implies that
\[
\tilde{L} = LK' = L\Ff_{q^{n_q}}.
\]

Finally, let $u\in L$ be the trace of $\sqrt[\ell]{F_{\mathbf{v}_0}}\in \tilde{L}$ in the extension $\tilde{L}/L$. Since $\Gal(\tilde{L}/L)$ is generated by the restriction of $\phi$,
by \eqref{actionofphi}, we have
\[
u = \sum_{k=0}^{n_q-1} \sqrt[\ell]{F_{\mathbf{v}_k}}.
\]
The conjugates of $u$ over $K$ are $(j,0,\ldots, 0) u$, $j\in \Z/\ell\Z$ which by \eqref{actionofv} satisfy
\[
(j,0,\ldots,0)u = \sum_{k=0}^{n_q-1} \zeta_{\ell}^{jq^{k}} \sqrt[\ell]{F_{\mathbf{v}_k}}.
\]
Thus $u$ generates the $\ell$-cyclic extension $L/K$ and has
\[
\prod_{j=0}^{\ell-1} \left( Y - \sum_{k=0}^{n_q-1} \zeta_{\ell}^{jq^{k}} \sqrt[\ell]{F_{\mathbf{v}_k}}\right)
\]
as its minimal polynomial.
\end{proof}
We summarize the construction in the following diagram.

\begin{center}
\begin{tikzpicture}[node distance = 1cm, auto]
      \node (K) {$K$};
      \node (L) [above of=K] {$L$};
      \node (E) [above of=L] {$E$};
      \node (K') [right of=K] {$K'$};
      \node (L') [above of=K'] {$L_{\mathbf{v}}$};
      \node (E') [above of=L'] {$E'$};
      \draw[-] (K) to node {} (L);
      \draw[-] (L) to node {} (L');
      \draw[-] (L) to node {} (E);
      \draw[-] (K) to node {} (K');
      \draw[-] (K') to node {} (L');
      \draw[-] (L') to node {} (E');
      \draw[-] (E) to node {} (E');
      \end{tikzpicture}
\end{center}

Now we are ready to introduce an explicit description of the correspondence of Corollary \ref{ltorcor}.

\begin{prop}\label{subclassifyprop}
The correspondence
\[
\begin{split}
\Omega_1 &\to \Omega_2\\
(F,b)&\mapsto L=L_{F,b}
\end{split}
\]
induces an $(\ell-1)$-to-$1$ map between the sets $\Omega_1,\Omega_2$ defined  in Corollary \ref{ltorcor}. 
\end{prop}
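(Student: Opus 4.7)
The strategy is to combine the cardinality relation $|\Omega_1|=(\ell-1)|\Omega_2|$ of Corollary \ref{ltorcor} with two direct verifications about the explicit map $(F,b)\mapsto L_{F,b}$: that it lands in $\Omega_2$, and that the $\Ff_\ell^*$-action $(F,b)\mapsto (F^c,b^c)$ produces $\ell-1$ distinct preimages of each $L_{F,b}$. Together these force the map to be exactly $(\ell-1)$-to-$1$ and in particular surjective.

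First I would check $L_{F,b}\in\Omega_2$. The $\ell$-cyclicity of $L_{F,b}/K$ is already the content of Lemmas \ref{subclassifylem1}--\ref{subclassifylem2}. For the modulus condition I would use $L_{F,b}\subseteq E'_{F,b}$ and check that $E'_{F,b}/K$ is unramified at every finite prime $P\notin\supp(\mathfrak{m})$: the constant field extension $K'/K$ is unramified, and each Kummer step $K'(u_i)/K'$ is ramified only at primes dividing $b_iF_i$; since $b_i\in\Ff_{q^{n_q}}^*$ is a unit and the prime factors of the $F_i$ lie over $\supp(F)\subseteq\supp(\mathfrak{m})$, we are done. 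Geometricity ($L_{F,b}\cap\bar\Ff_q=\Ff_q$) follows because the constant field of $L_{F,b}$ is a subfield of the constant field of $E'_{F,b}$, which equals $\Ff_{q^{n_q}}$ (as $F$ is not an $\ell$-th power, each $\mu_\ell$-stable factor $F_i$ is not an $\ell$-th power in $K'$, so each Kummer step is genuinely geometric); its degree over $\Ff_q$ must divide $[L_{F,b}:K]=\ell$, and as $\gcd(\ell,n_q)=1$ (from $n_q\mid \ell-1$) that degree must equal $1$.

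Next I would show that the fiber above each $L\in\Omega_2$ contains at least $\ell-1$ elements by exhibiting the orbit $\{(F^c,b^c):c\in\{1,\ldots,\ell-1\}\}$. These are pairwise distinct in $\Omega_1$: if $(F^c,b^c)\equiv(F^{c'},b^{c'})$ modulo $\ell$-th powers for $c\neq c'$, then $F^{c-c'}$ would be an $\ell$-th power, and $\gcd(c-c',\ell)=1$ forces $F$ itself to be one, contrary to hypothesis. On the other hand, all $\ell-1$ pairs map to the same field: the $\mu_\ell$-stable factorization of $F^c$ is $F_1^c\cdots F_{n_q}^c$, and one may take $u_i^c$ as the $\ell$-th root of $b_i^cF_i^c$, giving $E'_{F^c,b^c}=K(u_1^c,\ldots,u_{n_q}^c)\subseteq E'_{F,b}$. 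Thus $L_{F^c,b^c}$ is an $\ell$-cyclic subextension of $E'_{F,b}/K$, and by Lemma \ref{subclassifylem1} there is a unique such subextension, forcing $L_{F^c,b^c}=L_{F,b}$.

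Combined with $|\Omega_1|=(\ell-1)|\Omega_2|$, these two steps pin the fiber size to exactly $\ell-1$ and give surjectivity. The crucial technical input is the uniqueness of the $\ell$-cyclic subextension of $E'_{F,b}/K$, which is precisely why Lemma \ref{subclassifylem2} was set up to identify $\Gal(E'_{F,b}/K)$ as a wreath product so that Lemma \ref{subclassifylem1} applies.
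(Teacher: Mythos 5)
Your proof has a genuine logical gap in the counting step. You establish that each \emph{nonempty} fiber contains the full orbit $\{(F^c,b^c):c=1,\ldots,\ell-1\}$, i.e.\ has \emph{at least} $\ell-1$ elements, and you then claim that together with $|\Omega_1|=(\ell-1)|\Omega_2|$ this forces the map to be exactly $(\ell-1)$-to-$1$ and surjective. That inference does not work: if $I\subseteq\Omega_2$ is the image, your bound only yields $|\Omega_1|\geq(\ell-1)|I|$, hence $|I|\leq|\Omega_2|$, which is vacuous. Nothing prevents, a priori, a single $L$ from having a fiber much larger than $\ell-1$ while other elements of $\Omega_2$ are missed entirely. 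What the counting argument actually needs is the reverse inequality: every fiber has \emph{at most} $\ell-1$ elements, equivalently every fiber is \emph{contained} in an orbit. Then $|\Omega_1|\leq(\ell-1)|I|$ forces $|I|\geq|\Omega_2|$, hence $I=\Omega_2$ with all fibers of size exactly $\ell-1$. The paper proves precisely this "at most" direction: assuming $L_{F,b}=L_{G,c}$, it passes to the scalar extensions $\tilde L=L\Ff_{q^{n_q}}$ using \eqref{Lv0} and \eqref{ExtensionofScalars}, reads off from the Kummer description \eqref{eq:DefLv} that $F_{\mathbf{v}_0}$ and $G_{\mathbf{v}_0}^r$ agree up to $\ell$-th powers in $K'$ for a unique $r$, and then extracts $b=c^r$ and $F=G^r$ up to $\ell$-th powers by comparing leading coefficients and prime multiplicities. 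Your orbit computation (including the use of the uniqueness in Lemma \ref{subclassifylem1} to identify $L_{F^c,b^c}$ with $L_{F,b}$) is sound but proves the containment in the wrong direction; you would need to add the Kummer-side comparison to close the argument.

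Two smaller points worth tightening: your geometricity argument asserts the constant field of $E'_{F,b}$ is $\Ff_{q^{n_q}}$ "as each Kummer step is genuinely geometric," but a compositum of geometric extensions can acquire constants; the correct reason is that the $F_i$ are pairwise coprime with all multiplicities prime to $\ell$, so no nontrivial product $\prod(b_iF_i)^{a_i}$ is a constant times an $\ell$-th power. And in checking the modulus condition you only treat finite primes; ramification at $\infty$ must also be excluded, which happens to hold because $\deg F_{\mathbf{v}_0}\equiv 0\bmod\ell$ when $n_q>1$ (this is exactly the congruence computed in Section~\ref{genformsec}), but that is not automatic and should be said.
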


\begin{proof}
Let $\frak m$ be an effective divisor and let $F$ and $G$ be two $n_q$-divisible polynomials that are supported on $\frak m$.
For each prime polynomial $P\in \Ff_q[X]$ of degree divisible by $n_q$, let $P=\frak P_1\cdots \frak P_{n_q}$ be a  $\mu_{\ell}$-stable factorization.
Let $F = F_1\cdots F_{n_q}$ and $G=G_1\cdots G_{n_q}$ be the corresponding $\mu_\ell$-stable factorizations of $F$ and $G$. So, $\mathfrak{P}_1$ appears only in  $F_1$ if $P\mid F$ and only in $G_1$ if $P\mid G$.

Assume that $L_{F,b} = L_{G,c}$. By \eqref{Lv0}, \eqref{ExtensionofScalars}, and \eqref{eq:DefLv} there exists $0\leq r\leq \ell-1$ such  that
$F_{\mathbf{v}_0}$ and $G_{\mathbf{v}_0}^r$ are equal up to $\ell$-th-powers in $K'=\Ff_{q^{n_q}}(X)$. Comparing leading coefficients gives that $b^{n_q} = c^{n_q r}$ and so $b = c^r$ up to $\ell$-th-powers.

Let $P$ be a prime dividing $F$ (resp.\ $G$) with multiplicity $\alpha$ (resp.\ $\beta$). Then $\frak P_1$ divides $F_1$ (resp.\ $G_1$) with the same multiplicity. So $\alpha\equiv r\beta \mod \ell$. This implies that $F=G^r$ up to $\ell$-th-powers.
So we got that up to respective $\ell$-th powers, $(F, b) = (G^r , c^r )$. This implies the correspondence between the pairs and extensions is  $(\ell-1)$-to-$1$ and by Corollary~\ref{ltorcor} we also obtain all the extensions.
\end{proof}

\begin{rem}\label{rmk:modelforcovers}
Let us translate the above construction in terms of covers and equations.
Let $C_{F,b}$ be the smooth projective model of $L_{F,b}$. By \eqref{explicitequation},  $C_{F,b}$ is birationally equivalent to the affine plane curve
\[
\Bigg\{\prod_{j=0}^{\ell-1} \left( Y - \sum_{k=0}^{n_q-1} \zeta_{\ell}^{jq^k} \sqrt[\ell]{F_{\mathbf{v}_k}}\right) = 0\Bigg\}.
\]
By \eqref{Lv0} and \eqref{ExtensionofScalars}, $C_{F,b}$ is geometrically  birationally equivalent to
\begin{equation}\label{rmk:modelforcovers}
\{ Y^\ell = F_{\mathbf{v}_0}\}.
\end{equation}
\end{rem}

\begin{rem}

If $n_q=1$, then $F_{\mathbf{v}_0}=bF$ and the characteristic polynomial becomes
\[
\prod_{j=0}^{\ell-1} \left(Y- \zeta_{\ell}^i \sqrt[\ell]{bF}\right) = Y^{\ell}-bF
\]
which recovers the statement in Kummer theory.

\end{rem}

\section{Genus Formula}\label{genformsec}
From now on we fix an $n_q$-divisible polynomial $F$ which is $\ell$-th-power free and $b\in \Ff_{q^{n_q}}^*$. We let $L=L_{F,b}$ be as in \eqref{explicitequation} and $C=C_{F,b}$ the smooth projective model of $L$. We also assume that
\[
n_q\neq1.
\]
Our goal is to give a formula for the genus $g=g(C)$ of $C$.

The genus of projective curves is preserved under separable base change, so the genus  $g$ equals the genus of the smooth projective curve which is birationally equivalent to the affine plane curve
\eqref{rmk:modelforcovers}. By the Riemann-Hurwitz formula the genus of the curve in \eqref{rmk:modelforcovers} may be computed in terms of a factorization of the $\ell$-th-power free polynomial $F_{\mathbf{v}_0}$ as explained below.

Since $F$ is $\ell$-th-power free, there exist monic, square-free polynomials $f_1,\dots,f_{\ell-1}$ which are pairwise coprime such that
 \[
F = f_1f_2^2\cdots f_{\ell-1}^{\ell-1}.
\]
Since $F$ is $n_q$-divisible, all the $f_i$ are $n_q$-divisible as well. Hence, by Corollary~\ref{basefieldcor}, for each $i$, we have a $\mu_\ell$-stable factorization of  $f_i$:
\begin{equation}\label{eq:f_i}
f_i = f_{i,1}\cdots f_{i,n_q}
\end{equation}
We now  define
\begin{align}\label{stablefact2}
F_j := f_{1,j}f_{2,j}^2\cdots f_{\ell-1,j}^{\ell-1},
\end{align}
which gives us a $\mu_\ell$-stable factorization of $F$, namely
\[
F=F_1\cdots F_{n_q},
\]
and $F_{i} = \phi^{i-1}(F_1)$, with $\phi$ the $q$-th-power Frobenuis.
Taking $v_i$ to be the minimal non-negative   integer with $v_i \equiv q^{1-i} \mod \ell$, we get by \eqref{eq:DefFv} and \eqref{stablefact2} that
\begin{align}\label{F_v}
F_{\mathbf{v}_0} = b'  \prod_{j=1}^{n_q} F_j^{v_j} = b' \prod_{i=1}^{\ell} \prod_{j=1}^{n_q} f_{i,j}^{iv_j},
\end{align}
for some $b'\in \Ff_{q^{n_q}}^*$.
We apply the Riemann-Hurwitz formula which in this setting gives that
\begin{equation}\label{eq:genus1}
2g+2\ell-2 = (\ell-1) \sum_{i=1}^{\ell}\sum_{j=1}^{n_q} \deg(f_{i,j}) + \begin{cases} 0 & \mbox{if } \deg(F_{\mathbf{v}_0}) \equiv 0 \mod{\ell} \\ \ell-1 & \mbox{otherwise} \end{cases}.
\end{equation}
(cf.\ \cite[Equation (1.2)]{Meisner17} with our $\ell$ replacing $r$ in \emph{loc.cit.} and using the fact that $\ell$ is coprime to all of the $v_i$).

Since $\deg(f_{i,j})=\deg(f_{i,1})$ for all $j$,
\[
\deg(F_{\mathbf{v}_0}) \equiv \sum_{i=1}^{\ell-1} \sum_{j=1}^{n_q} iq^{1-j}\deg(f_{i,j}) \equiv \sum_{i=1}^{\ell-1}i\deg(f_{i,1})\sum_{j=1}^{n_q} q^{1-j} \mod{\ell}.
\]
Since we assume $n_q\not=1$ and since $q^{n_q}\equiv 1\mod\ell$, we have
\[
\sum_{j=1}^{n_q} q^{1-j} \equiv q\cdot \frac{1-q^{-n_q}}{1-q^{-1}} \equiv 0 \mod{\ell}.
\]
Furthermore, by \eqref{eq:f_i}, $\sum_{j=1}^{n_q} \deg(f_{i,j}) = \deg(f_i)$. Plugging the above computations into \eqref{eq:genus1} yields the genus formula for $C$:
\begin{align}\label{genform}
2g+2\ell-2 = (\ell-1) \sum_{i=1}^{\ell} \deg(f_i).
\end{align}

\begin{rem}\label{genformrem}
The Riemann-Hurwitz formula and \eqref{genform} imply that the only primes that ramify in $\Ff_q(C)$ are those that divide $F$. Consequently,  no linear primes ramify in $\Ff_q(C)$ if $n_q\not=1$. Hence, all the points of $\Pp^1(\Ff_q)$ are unramified in $C$.

\end{rem}

We use \eqref{genform} to parametrize the moduli space $\mathcal{H}_{g,\ell}$ as defined in \eqref{moduli_space}. Define the following set
\begin{equation}\label{eq:FqD}
\F_{n_q}(D) = \left\{(f_1,\dots,f_{\ell-1})\in \Ff_q[X]\ :\ \parbox{15em}{$f_1,\ldots, f_{\ell-1}$ are square free, pairwise coprime, $n_q$-divisible, and
$\deg(f_1\cdots f_{\ell-1})=D$}\right\}.
\end{equation}
Each tuple $(f_1,\dots,f_{\ell-1})\in\F_{n_q}(D)$ corresponds to the $\ell$-th-power free $n_q$-divisible polynomial $F= f_1f_2^2\cdots f_{\ell-1}^{\ell-1}$.
By \eqref{genform}, the genus of a corresponding curve $C=C_{F,b}$ satisfies:
\begin{equation}\label{eq:gtoD}
D = \frac{2g+2\ell-2}{\ell-1}.
\end{equation}
Thus, by Corollary \ref{ltorcor}, the correspondence $(F,b)\mapsto C_{F,b}$ defines an $(\ell-1)$-to-$1$ correspondence between the sets
\[
\F_{n_q}(D) \times \Ff_{q^{n_q}}^*/(\Ff_{q^{n_q}}^*)^{\ell} \to \Hh_{g,\ell}.
\]
Since $n_q$-divisible polynomials have degree divisible by $n_q$, we have that $\F_{n_q}(D) = \emptyset$ if and only if $D\not \equiv 0 \mod n_q$. Thus we immediately get

\begin{prop}\label{genformprop1}
For $n_q>1$, $\Hh_{g,\ell}=\emptyset$ if $2g+2\ell-2 \not\equiv 0 \mod{(\ell-1)n_q}$.
\end{prop}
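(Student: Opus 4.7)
The plan is to read off the proposition directly from the parametrization already assembled in the preceding pages, without introducing any new ingredients. By Corollary~\ref{ltorcor} combined with Proposition~\ref{subclassifyprop}, the moduli space $\Hh_{g,\ell}$ is the image of an $(\ell-1)$-to-$1$ map
\[
\F_{n_q}(D) \times \Ff_{q^{n_q}}^*/(\Ff_{q^{n_q}}^*)^{\ell} \to \Hh_{g,\ell},
\]
and the genus formula \eqref{genform} pins down the degree parameter to $D = (2g+2\ell-2)/(\ell-1)$. So to prove emptiness of $\Hh_{g,\ell}$ it suffices to prove emptiness of $\F_{n_q}(D)$ for this specific $D$.

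The arithmetic constraint then splits into two cases. If $\ell-1$ does not divide $2g+2\ell-2$, then $D$ is not even an integer and the set $\F_{n_q}(D)$ is vacuously empty. Otherwise $D$ is a non-negative integer, and the key observation is that any tuple $(f_1,\ldots,f_{\ell-1})\in\F_{n_q}(D)$ consists of $n_q$-divisible polynomials; by Definition~\ref{def:n-divisible}, every prime factor of each $f_i$ has degree divisible by $n_q$, whence $n_q\mid\deg(f_i)$ for each $i$ and therefore $n_q\mid D = \sum_i\deg(f_i)$. Contrapositively, if $n_q\nmid D$, i.e.\ if $(\ell-1)n_q\nmid 2g+2\ell-2$, then $\F_{n_q}(D)=\emptyset$.

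Combining the two cases, the hypothesis $2g+2\ell-2\not\equiv 0\mod(\ell-1)n_q$ forces $\F_{n_q}(D)=\emptyset$, and consequently $\Hh_{g,\ell}=\emptyset$ via the surjection above. There is essentially no obstacle: all of the substantive work—the classification of $\ell$-cyclic covers by pairs $(F,b)$ via Galois descent, and the derivation of the genus formula from Riemann--Hurwitz applied to the geometric model \eqref{rmk:modelforcovers}—has already been carried out in Sections~\ref{classify} and~\ref{genformsec}. This proposition is purely the arithmetic bookkeeping that records the compatibility between the genus identity $2g+2\ell-2=(\ell-1)D$ and the $n_q$-divisibility built into the parametrizing polynomials.
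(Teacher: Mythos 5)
Your proof is correct and follows the paper's own argument: both reduce emptiness of $\Hh_{g,\ell}$ to emptiness of $\F_{n_q}(D)$ via the $(\ell-1)$-to-$1$ parametrization and the genus relation $2g+2\ell-2=(\ell-1)D$, and then observe that every tuple in $\F_{n_q}(D)$ consists of $n_q$-divisible polynomials so $n_q \mid D$. The only difference is that you spell out the degenerate case where $\ell-1 \nmid 2g+2\ell-2$ (so $D$ is not an integer) separately, which the paper leaves implicit.
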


When $D\equiv 0\mod n_q$ or equivalently $2g+2\ell-2\equiv 0 \mod(\ell-1)n_q$,  it suffices to count the number of points on curves when we parameterize by $\F_{n_q}(D)$ instead of by $\Hh_{g,\ell}$.
More rigorously,
\begin{prop}\label{H_gtoF_n}
Let $g, D$  be related by \eqref{eq:gtoD} and assume that $n_q>1$ and that $2g+2\ell-2\equiv 0 \mod(\ell-1)n_q$. Choose a random $(f_1,\ldots, f_{\ell-1})$ uniformly in $\F_{n_q}(D)$ and a random $b\in \Ff_{q^{n_q}}^*$ and put $F$ and $C_{F,b}$ as above. Choose a random $C$ uniformly in $\Hh_{g,\ell}$. Then, for every $N\geq 0$
\[
\Prob(\#C_{F,b}(\Ff_q) = N) = \Prob(\#C(\Ff_q) = N).
\]
\end{prop}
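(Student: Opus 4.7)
The plan is to show that the parameterization map
\[
\Phi\colon \F_{n_q}(D) \times \Ff_{q^{n_q}}^* \to \Hh_{g,\ell}, \qquad ((f_1,\dots,f_{\ell-1}),b)\mapsto C_{F,b}, \quad F=f_1f_2^2\cdots f_{\ell-1}^{\ell-1},
\]
is surjective with constant fibre size. Once this is established, the uniform distribution on the source pushes forward to the uniform distribution on $\Hh_{g,\ell}$, and since $\#C_{F,b}(\Ff_q)$ is an $\Ff_q$-isomorphism invariant of $C_{F,b}$, the two probabilities in the statement immediately agree.

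First I would identify $\F_{n_q}(D)$, via the unique square-free decomposition $F=\prod_{i=1}^{\ell-1} f_i^i$, with the set of $\ell$-th-power free, $n_q$-divisible polynomials $F\in\Ff_q[X]$ whose associated $D=\deg(f_1\cdots f_{\ell-1})$ equals $(2g+2\ell-2)/(\ell-1)$. The congruence assumption $2g+2\ell-2\equiv0\bmod(\ell-1)n_q$ ensures $n_q\mid D$ so that $\F_{n_q}(D)$ is non-empty, and the genus formula \eqref{genform} guarantees that these $F$ are precisely the first coordinates of pairs in $\Omega_1$ (from Corollary~\ref{ltorcor}, applied with any sufficiently large divisor $\mathfrak{m}$) whose corresponding curve lies in $\Hh_{g,\ell}$.

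Next I would invoke Proposition~\ref{subclassifyprop}: the correspondence $(F,b)\mapsto C_{F,b}$ is $(\ell-1)$-to-$1$ from $\Omega_1$ onto $\Omega_2$, where $b$ is taken modulo $\ell$-th powers in $\Ff_{q^{n_q}}^*$. Because $\ell\mid q^{n_q}-1$ by the definition of $n_q$, the quotient map $\Ff_{q^{n_q}}^*\to\Ff_{q^{n_q}}^*/(\Ff_{q^{n_q}}^*)^\ell$ is uniform with every fibre of size $(q^{n_q}-1)/\ell$. Combining the two fibrations, $\Phi$ is surjective onto $\Hh_{g,\ell}$ with every curve having exactly $(\ell-1)(q^{n_q}-1)/\ell$ preimages, which is the desired uniformity.

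Since $\#C_{F,b}(\Ff_q)$ depends only on the image $C_{F,b}\in\Hh_{g,\ell}$, the constant-fibre property then yields
\[
\Prob\bigl(\#C_{F,b}(\Ff_q)=N\bigr)=\frac{|\{C\in \Hh_{g,\ell}:\#C(\Ff_q)=N\}|}{|\Hh_{g,\ell}|}=\Prob\bigl(\#C(\Ff_q)=N\bigr).
\]
The only substantive point is the verification that $\Phi$ has constant fibre size over $\Hh_{g,\ell}$; this is pure bookkeeping given Corollary~\ref{ltorcor} and Proposition~\ref{subclassifyprop}, so I do not anticipate any real obstacle.
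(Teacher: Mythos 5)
Your proposal is correct and takes essentially the same route as the paper, which leaves the proof of Proposition~\ref{H_gtoF_n} implicit in the two paragraphs immediately preceding it: the genus formula \eqref{genform} identifies $\F_{n_q}(D)\times \Ff_{q^{n_q}}^*/(\Ff_{q^{n_q}}^*)^\ell$ with the relevant piece of $\Omega_1$, and Corollary~\ref{ltorcor} together with Proposition~\ref{subclassifyprop} gives the $(\ell-1)$-to-$1$ constant-fiber surjection onto $\Hh_{g,\ell}$, so uniformity pushes forward. You merely make explicit the step the paper takes for granted, namely that $\Ff_{q^{n_q}}^*\to\Ff_{q^{n_q}}^*/(\Ff_{q^{n_q}}^*)^\ell$ has constant fibers of size $(q^{n_q}-1)/\ell$ because $\ell\mid q^{n_q}-1$.
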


\section{Number of Points Formula}\label{numpts}
The goal of this section is to give an analytic formula for the number of rational points on cyclic covers.
Let $\pi \colon C\to \Pp^1_{\Ff_q}$ be an $\ell$-cyclic cover of smooth projective $\Ff_q$-curves and let
\[
C' = C\times_{\Ff_q} \Ff_{q^{n_q}}
\] be the scalar extension of $C$ to $\Ff_{q^{n_q}}$.
We start by a simple general observation connecting the number of $\Ff_q$-rational points of $C$ and $\Ff_{q^{n_q}}$-rational points of $C'$ lying above $\Pp^1(\Ff_q)$.

It is convenient to introduce the following piece of notation: For $x\in \Pp^1(\Ff_q)$, let
\begin{equation}\label{eq:Nx}
N_x = \# \{ y\in C(\Ff_q) : \pi(y) = x\} \quad \mbox{and} \quad N_x' = \# \{ y\in C'(\Ff_{q^{n_q}}) : \pi(y) = x\}
\end{equation}
be the number of $\Ff_q$-rational and $\Ff_{q^{n_q}}$-rational points on $C$ and $C'$ lying above $x$, respectively.

\begin{prop}\label{numptsprop}
Let $x\in \Pp^1(\Ff_q)$. Then $N_x=N_x'$.
\end{prop}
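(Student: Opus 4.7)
The plan is to reduce the identity $N_x=N_x'$ to a pointwise residue-field analysis of the fibre $\pi^{-1}(x)$ and then to exploit the arithmetic fact that $n_q\mid \ell-1$, which forces $\gcd(\ell,n_q)=1$. First, I would record the standard reformulation: for any closed point $y\in\pi^{-1}(x)$ with residue field $\kappa(y)=\Ff_{q^{d(y)}}$, a morphism $\mathrm{Spec}(\Ff_q)\to C$ (resp.\ $\mathrm{Spec}(\Ff_{q^{n_q}})\to C'$) supported at $y$ must factor through $\kappa(y)$ (resp.\ through the base change $\kappa(y)\otimes_{\Ff_q}\Ff_{q^{n_q}}$). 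Hence the contribution of $y$ to $N_x$ is $1$ if $d(y)=1$ and $0$ otherwise, while its contribution to $N_x'$ equals the number of $\Ff_{q^{n_q}}$-algebra homomorphisms out of $\Ff_{q^{d(y)}}\otimes_{\Ff_q}\Ff_{q^{n_q}}\cong \Ff_{q^{\lcm(d(y),n_q)}}^{\gcd(d(y),n_q)}$, which is $d(y)$ if $d(y)\mid n_q$ and $0$ otherwise.

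Next I would invoke the Galois structure: since $\pi\colon C\to \Pp^1_{\Ff_q}$ is Galois with group $\Z/\ell\Z$, the classical relation $efg=\ell$ combined with the primality of $\ell$ forces $d(y)\in\{1,\ell\}$ for every closed point $y$ in the fibre above $x$. The arithmetic punchline is then that $n_q$, being the multiplicative order of $q$ modulo $\ell$, divides the group order $\ell-1$ of $(\Z/\ell\Z)^*$; in particular $\ell\nmid n_q$. Consequently a point with $d(y)=\ell$ contributes $0$ to both $N_x$ and $N_x'$, while a point with $d(y)=1$ contributes $1$ to each. Summing over $y\in\pi^{-1}(x)$ yields $N_x=N_x'$.

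The one subtlety worth checking is the possibly ramified case, where $\pi^{-1}(x)$ is non-reduced; however, morphisms from a field factor through the residue fields of closed points, so the counts $N_x$ and $N_x'$ only see the underlying reduced structure and the analysis above applies verbatim. (By Remark~\ref{genformrem} no ramification actually occurs over $\Pp^1(\Ff_q)$ when $n_q>1$, so in the regime relevant to Theorem~\ref{thm:main} only the totally split and inert cases contribute; but the proof as stated goes through in full generality.)
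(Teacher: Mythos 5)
Your argument is correct and is essentially the same as the paper's: both reduce the identity to residue-field data in the fibre above $x$ and exploit the fact that $n_q$ is coprime to $\ell$ (since $n_q\mid\ell-1$). The paper phrases this via preservation of the ramification/residue/splitting triple $(e,f,g)$ under the unramified degree-$n_q$ base change, whereas you compute $N_x$ and $N_x'$ directly from the residue fields of closed points via $\kappa(y)\otimes_{\Ff_q}\Ff_{q^{n_q}}$ --- a scheme-theoretic gloss on the same calculation.
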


\begin{proof}
Let $x\in \mathbb{P}^1(\Ff_q)$, let $O=O_{\Pp^1_{\Ff_q},x}$ be the local ring at $x$ (which is the localization of $\Ff_q[X]$ at $X-x$ if $x$ is finite or of $\Ff_q[X^{-1}]$ at $X^{-1}$ if $x=\infty$) and $\frak p$ the corresponding maximal ideal (which is $(X-x)$ if $x$ is finite or $(X^{-1})$ if $x=\infty$).  Since $L=\Ff_q(C)$ is Galois with cyclic group of prime order $\ell$,
there are $3$ possible factorizations of $\frak p = \frak{P}_1^{e}\cdots \frak{P}_g^e$; namely,

\begin{enumerate}
\item $g=\ell$, $e=f=1$, (where  $f=\deg \frak P_i$) in which case $N_x=\ell$
\item $e=\ell$, $f=g=1$, in which case $N_x=1$,
\item $f=\ell$, $e=g=1$, in which case $N_x=0$.
\end{enumerate}
Now we base change to $\Ff_{q^{n_q}}$, and denote it as before by adding a tag. Since $n_q= [\Ff_{q^{n_q}}:\Ff_q]$ is co-prime to $\ell$ (as we add $\ell$-th root of unity), we get that $f=f'$. Since the extension $\Ff_{q^{n_q}}/\Ff_q$ is unramified, we get that $e=e'$. Since $\ell=[L:K]=[L':K']$, we conclude that $g=g'$. As $N_x'$ is determined by $e',f',g'$ in the same manner as $N_x$ is determined by $e,f,g$, we conclude that $N_x= N'_x$.
\end{proof}

Next we compute the number of rational points in terms of the parametrization of Section~\ref{genformsec}. Recall that $b\in \Ff_{q^{n_q}}^*$,
\begin{equation}\label{eq:DefF}
F=f_1f_2^2\cdots f_{\ell-1}^{\ell-1}
\end{equation} for $(f_1,\ldots, f_{\ell-1})\in \F_{n_q}(D)$ and that $C=C_{F,b}$ in the notation of \eqref{rmk:modelforcovers}. Also recall that $C'$ is birationally equivalent to the affine plane curve $Y^\ell = F_{\mathbf{v}_0}$, see \eqref{rmk:modelforcovers}, where $F_{\mathbf{v}_0}$ is given in \eqref{eq:DefFv}.

Let $\chi_\ell$ be a primitive multiplicative character of $\Ff_{q^{n_q}}$ of order $\ell$ (there exists such, as $n_q$ by definition is the minimal positive integer with $q^{n_q}\equiv 1\mod \ell$). For $x\in \Ff_q$ and for a polynomial $G$, we define
\begin{equation}\label{eq:chi_x}
\chi_x(G) = \chi_\ell(G(x)),
\end{equation}
which is a multiplicative Dirichlet character modulo $X-x$ of order $\ell$.
We extend this definition to $x=\infty\in \mathbb{P}^1(\Ff_q)$ by setting
\[
\chi_{\infty}(G) = \chi_{\ell}(g_n),
\]
with $g_n$ being the leading coefficient of $G$.

\begin{prop}\label{numptscor}
We have
\begin{equation}\label{analytic_formula}
\#C(\Ff_q) = \sum_{w=0}^{\ell-1}\chi_\ell^w(b) + \sum_{x\in\Ff_q} \sum_{w=0}^{\ell-1} \chi_{x}^w\left(F_{\mathbf{v}_0}\right).
\end{equation}
\end{prop}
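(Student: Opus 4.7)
The plan is to reduce $\#C(\Ff_q)$ to character sums on the base-changed curve $C'/\Ff_{q^{n_q}}$, which by Remark~\ref{rmk:modelforcovers} admits the explicit Kummer model $Y^\ell = F_{\mathbf{v}_0}(X)$. By Proposition~\ref{numptsprop},
\[
\#C(\Ff_q)=\sum_{x\in \Pp^1(\Ff_q)} N_x=\sum_{x\in \Pp^1(\Ff_q)} N_x',
\]
so it suffices to evaluate $N_x'$ for each $x \in \Pp^1(\Ff_q)$, treating the finite points $x \in \Ff_q$ and the point at infinity separately.

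For finite $x\in \Ff_q$, the $n_q$-divisibility of $F$ with $n_q>1$ forces $F(x)\neq 0$ (remark after Definition~\ref{def:n-divisible}), and hence $F_{\mathbf{v}_0}(x)\neq 0$ by \eqref{eq:DefFv}. The affine equation is therefore smooth above $x$, and $N_x'$ equals the number of $y\in \Ff_{q^{n_q}}$ with $y^\ell=F_{\mathbf{v}_0}(x)$. The standard identity $\#\{y\in \Ff_{q^{n_q}} : y^\ell=a\}=\sum_{w=0}^{\ell-1}\chi_\ell^w(a)$, valid for any $a\in \Ff_{q^{n_q}}^*$, gives
\[
N_x'=\sum_{w=0}^{\ell-1}\chi_\ell^w(F_{\mathbf{v}_0}(x))=\sum_{w=0}^{\ell-1}\chi_x^w(F_{\mathbf{v}_0}).
\]

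For $x=\infty$, I would use the congruence $\deg F_{\mathbf{v}_0} \equiv 0\mod\ell$, already established in the proof of the genus formula in Section~\ref{genformsec} under the assumption $n_q>1$; equivalently, $\infty$ is unramified in the cover, in accordance with Remark~\ref{genformrem}. Setting $T=1/X$ and $Z=Y\cdot T^{\deg(F_{\mathbf{v}_0})/\ell}$ turns the affine equation, near $T=0$, into $Z^\ell=c\cdot(1+O(T))$, where $c$ is the leading coefficient of $F_{\mathbf{v}_0}$. A direct reading of \eqref{eq:DefFv} together with $v_i\equiv q^{1-i}\mod\ell$ yields $c=b^{\sum_i q^{i-1}v_i}$ with exponent $\equiv n_q\mod\ell$, so $c\equiv b^{n_q}$ modulo $\ell$-th powers in $\Ff_{q^{n_q}}^*$. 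Hensel's lemma then identifies the $\Ff_{q^{n_q}}$-rational places of $C'$ above $\infty$ with solutions of $z^\ell=c$ in $\Ff_{q^{n_q}}$, so
\[
N_\infty'=\sum_{w=0}^{\ell-1}\chi_\ell^w(c)=\sum_{w=0}^{\ell-1}\chi_\ell^w(b^{n_q})=\sum_{w=0}^{\ell-1}\chi_\ell^w(b),
\]
the last step using that $w\mapsto wn_q$ is a bijection of $\Z/\ell\Z$ since $\gcd(n_q,\ell)=1$. Summing $N_\infty'$ with the contributions from the finite $x$ yields \eqref{analytic_formula}.

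The main obstacle I foresee is the bookkeeping at $x=\infty$: identifying the leading coefficient of $F_{\mathbf{v}_0}$ with $b^{n_q}$ modulo $\ell$-th powers, and carefully passing from the possibly singular affine Kummer model to the smooth projective $C'$ (which relies crucially on $\ell \mid \deg F_{\mathbf{v}_0}$). For the finite rational points the argument is essentially immediate because $n_q$-divisibility prevents $F_{\mathbf{v}_0}$ from vanishing on $\Ff_q$, so the fibers are parametrized by the affine equation with no further subtlety.
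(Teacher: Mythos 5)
Your argument is correct and shares the overall structure of the paper's proof: reduce to $C'$ via Proposition~\ref{numptsprop}, evaluate the local contribution $N_x'$ for each $x\in\Pp^1(\Ff_q)$ from the Kummer model $Y^\ell=F_{\mathbf{v}_0}$, and collapse the $x=\infty$ term using that the leading coefficient of $F_{\mathbf{v}_0}$ is $b^{n_q}$ up to $\ell$-th powers. The genuine difference is that the paper delegates the per-point identity $N_x'=\sum_{w}\chi_x^w(F_{\mathbf{v}_0})$ to the reference \cite{Meisner17}, while you re-derive it: for finite $x$ from the non-vanishing of $F_{\mathbf{v}_0}$ on $\Ff_q$ together with the elementary count $\#\{y\in\Ff_{q^{n_q}}:y^\ell=a\}=\sum_{w}\chi_\ell^w(a)$ for $a\neq0$, and for $x=\infty$ by the change of coordinates $T=1/X$, $Z=YT^{\deg F_{\mathbf{v}_0}/\ell}$ (legitimate because $\ell\mid\deg F_{\mathbf{v}_0}$ when $n_q>1$), reducing to $z^\ell=c$ with $c$ the leading coefficient. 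This makes the proof self-contained at the cost of a bit more length. You are also slightly more careful than the paper's wording on one point: the leading coefficient of $F_{\mathbf{v}_0}$ is $b^{\sum_i q^{i-1}v_i'}$, which need only be $\equiv b^{n_q}$ modulo $\ell$-th powers rather than equal to $b^{n_q}$; that weaker statement is exactly what is needed since $\chi_\ell$ has order $\ell$, and the final substitution $\sum_w\chi_\ell^w(b^{n_q})=\sum_w\chi_\ell^w(b)$ uses $\gcd(n_q,\ell)=1$ just as you say.
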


\begin{proof}
 In  \cite{Meisner17} it is  established that for $x\in \mathbb{P}^1(\Ff_q)$
\[
N_x' = \sum_{w=0}^{\ell-1} \chi_x^{w}(F_{\mathbf{v}_0})
\]
(To see it follows from \emph{loc.\ cit.}, plug $\ell$, $F_{\mathbf{v}_0}$ for $r$, $F$ in the notation of \emph{loc.\ cit.} For finite $x$, we note that $F_{\mathbf{v}_0}(x) \neq 0$ since $f_i(x)\neq 0$ as they are $n_q$-divisible polynomial.
Thus the formula is given in the first paragraph of the proof of Lemma~2.1 in \emph{loc.\ cit.} For $x=\infty$, see page 536.)

Then, as $N_x=N_x'$ (Proposition~\ref{numptsprop}) we get that
\[
\#C(\Ff_q) = \sum_{x\in \mathbb{P}^1(\mathbb{F}_q)} N_x = \sum_{x\in \mathbb{P}^1(\mathbb{F}_q)} N_x' = \sum_{x\in\mathbb{P}^1(\Ff_q)} \sum_{w=0}^{\ell-1} \chi_{x}^w\left(F_{\mathbf{v}_0}\right).
\]
The leading coefficient of $F_{\mathbf{v_0}}$ is $b^{n_q}$ and so $\sum_{w=0}^{\ell-1}\chi_{\ell}^w(b) = \sum_{w=0}^{\ell-1}\chi_{\ell}^w(b^{n_q})$.
\end{proof}

\section{Set Count}\label{SetCount}
In light of the analytic formula \eqref{analytic_formula}, the study of the distribution of the number of points on $\ell$-cyclic covers parameterized by $\F_{n_q}(D)\times \Ff_{q^{n_q}}^*$
may be reduced to the computation of the size of  sets of the form
\begin{equation}\label{eq:FknqD}
\F^k_{n_q}(D)=\F^k_{n_q,\epsilon}(D) = \{(f_1,\dots,f_{\ell-1})\in\F_{n_q}(D) : \chi_{\ell}(F_{\mathbf{v}_0}(x_i)) = \epsilon_i, i=1,\dots,k\},
\end{equation}
where $x_1,\ldots, x_k \in\Ff_q$ are pairwise distinct elements, $\epsilon=(\epsilon_1,\ldots, \epsilon_{k})\in\mu_{\ell}^k$ is fixed, and, as usual, $F$ and $F_{\mathbf{v}_0}$, $\mathbf{v}_0 = (1,q^{-1}, \ldots, q^{1-n_q})$ are as defined in \eqref{eq:DefF} and \eqref{eq:DefFv}, respectively.
The computation is done by analyzing an appropriate  generating function $\G_k(u)$.

\begin{rem}

While the definition of $\F^k_{n_q}(D)$ depends on the choice of $b$ and of $(\epsilon_1,\dots,\epsilon_k)$, its asymptotic size --- which is computed below in \eqref{Fsize1} --- is independent of this choice. Hence we omit the $\epsilon_i$'s from the notation.

\end{rem}

\subsection{Generating Series}
Let $\zeta_{n_q} \in \mathbb{C}$ be a primitive $n_q$-th root of unity and define the following  auxiliary functions:
\[
\begin{split}
\mathcal{I}_{\infty}(F) &:=  \mu^2(F)\prod_{P|F}\frac{1}{n_q}\left(\sum_{i=0}^{n_q-1} \zeta_{n_q}^{i\deg(P)} \right)
\\
\mathcal{I}_{x_i}(F) &:= \frac{1}{\ell}\left(\sum_{w=0}^{\ell-1} \epsilon_{i}^{-w}\chi_{x_i}^w(F)\right),
\end{split}
\]
with the convention that $\chi_{x}^0$ is the trivial character modulo $X-x$.
Using these functions we define a function in $\ell-1$ variables (which are always assumed to be monic polynomials)
\[
\I(f_1,\ldots, f_{\ell-1}) = \I_{\infty}(f_1\cdots f_{\ell-1}) \prod_{i=1}^k\I_{x_i}(F_{\mathbf{v}_0}) ,
\]
with $F$ and $F_{\mathbf{v}_0}$ defined as in \eqref{eq:DefF} and \eqref{eq:DefFv}.

\begin{lem}
For a tuple of monic polynomials $(f_1,\ldots,f_{\ell - 1})$ we have
\[
\I(f_1,\ldots, f_{\ell-1}) = \begin{cases}
1, &  \mbox{if }(f_1,\ldots, f_{\ell-1})\in \F_{n_q}^k(\deg(f_1\cdots f_{\ell-1}))\\
0, & \mbox{otherwise.}
\end{cases}
\]
In particular,
\begin{align}\label{generatfun}
\G_k(u) := \sum_{f_1,\dots,f_{\ell-1}}\I(f_1,\ldots, f_{\ell-1})u^{\deg(f_1\cdots f_{\ell-1})}  = \sum_{D=0}^{\infty} |\F^k_{n_q}(D)|u^D,
\end{align}
where the first sum is over monic polynomials.
\end{lem}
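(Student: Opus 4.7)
The plan is to show that $\I$ factors naturally into two kinds of indicator expressions: the factor $\I_\infty(F)$ detects membership in $\F_{n_q}(D)$, while the product $\prod_{i=1}^k \I_{x_i}(F_{\mathbf{v}_0})$ detects the character conditions $\chi_\ell(F_{\mathbf{v}_0}(x_i))=\epsilon_i$. Once both claims are verified, the lemma is immediate, and the generating-function identity \eqref{generatfun} follows by grouping the sum over tuples by total degree.

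First I would unpack $\I_\infty(F)$. The Möbius factor $\mu^2(F)$ is $1$ precisely when $F=f_1\cdots f_{\ell-1}$ is squarefree, and squarefreeness of this product is equivalent to each $f_j$ being squarefree together with the $f_j$ being pairwise coprime. For the remaining product, I would invoke the standard orthogonality relation for the cyclic group of $n_q$-th roots of unity,
\[
\frac{1}{n_q}\sum_{i=0}^{n_q-1}\zeta_{n_q}^{i\deg(P)} = \begin{cases} 1 & n_q\mid\deg(P),\\ 0 & \text{otherwise,}\end{cases}
\]
so that $\prod_{P\mid F}$ equals $1$ exactly when every prime divisor of $F$ has degree divisible by $n_q$, i.e.\ when $F$ is $n_q$-divisible. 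For squarefree $F$ this is equivalent to each $f_j$ being $n_q$-divisible. Therefore $\I_\infty(F)$ is the indicator of the conditions defining $\F_{n_q}(\deg F)$ in \eqref{eq:FqD}.

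Next I would treat the factors $\I_{x_i}(F_{\mathbf{v}_0})$. The crucial point, and the place that needs a moment of care, is that when $\I_\infty(F)\neq 0$ each $f_j$ is $n_q$-divisible, so (by the Remark following Definition~\ref{def:n-divisible}) $f_j(x)\neq 0$ for every $x\in\Ff_q$; hence $F_{\mathbf{v}_0}(x_i)\in\Ff_{q^{n_q}}^*$ is well-defined and nonzero. In that regime $\chi_{x_i}^w(F_{\mathbf{v}_0})=\chi_\ell(F_{\mathbf{v}_0}(x_i))^w$, and orthogonality of characters on $\mu_\ell$ gives
\[
\I_{x_i}(F_{\mathbf{v}_0}) = \frac{1}{\ell}\sum_{w=0}^{\ell-1}\epsilon_i^{-w}\chi_\ell(F_{\mathbf{v}_0}(x_i))^w = \begin{cases} 1 & \chi_\ell(F_{\mathbf{v}_0}(x_i))=\epsilon_i,\\ 0 & \text{otherwise.}\end{cases}
\]
If instead $\I_\infty(F)=0$ the whole product $\I(f_1,\ldots,f_{\ell-1})$ vanishes, consistently with $(f_1,\ldots,f_{\ell-1})\notin\F_{n_q}^k(\deg F)$.

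Combining the two analyses gives the $0/1$-description of $\I$ stated in the lemma. The generating function identity is then obtained by summing $\I(f_1,\ldots,f_{\ell-1})u^{\deg(f_1\cdots f_{\ell-1})}$ over all tuples of monic polynomials and grouping by $D=\deg(f_1\cdots f_{\ell-1})$: the nonzero contributions come precisely from $\F_{n_q}^k(D)$, each contributing $u^D$, yielding $\sum_{D\ge0}|\F_{n_q}^k(D)|u^D$. The main (mild) obstacle is the interaction between the two kinds of factors: one must observe that the nonvanishing of $\I_\infty$ forces $F_{\mathbf{v}_0}(x_i)\neq 0$ at every $x_i\in\Ff_q$, so that the character orthogonality at the $x_i$'s applies with no exceptional cases.
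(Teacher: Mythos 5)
Your proof is correct and takes essentially the same approach as the paper: both identify $\I_\infty$ as the indicator of the $\F_{n_q}(D)$ conditions and each $\I_{x_i}$ as the indicator of the character condition at $x_i$, via orthogonality of roots of unity and of characters. You merely make explicit what the paper compresses into a single appeal to ``the orthogonality relations,'' including the helpful observation that nonvanishing of $\I_\infty$ forces the $f_j$ to be $n_q$-divisible and hence $F_{\mathbf{v}_0}(x_i)\neq 0$, so the $\mu_\ell$-orthogonality at each $x_i$ applies without exceptional cases.
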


\begin{proof}
By the orthogonality relations, for a tuple of monic polynomials $(f_1,\ldots, f_{\ell-1})$, we have that
$\mathcal{I}_{\infty}(f_1\cdots f_{\ell-1}) = 1$ if  $(f_1,\dots,f_{\ell-1})\in \F_{n_q}(D)$, and $=0$ otherwise. Similarly, if $(f_1,\ldots, f_{\ell-1})\in \F_{n_q}(D)$, then   $\I_{x_i} (F_{\mathbf{v}_0}) =1$ if $\chi_{x_i}(F_{\mathbf{v}_0})=\epsilon_i$ and $=0$ otherwise. This completes the proof of the lemma.
\end{proof}

Expanding the product $\prod_{i} \I_{x_i}(F)$ gives
\[
\prod_{i=1}^k \mathcal{I}_{x_i}(F) = \frac{1}{\ell^k} \prod_{i=1}^k \sum_{w=0}^{\ell-1} \epsilon_{i}^{-w}\chi_{x_i}^w(F)
 = \frac{1}{\ell^k}\sum_{\mathbf{w}}  \left(\prod_{i=1}^k \epsilon_i^{-w_i}\right) \chi_{\mathbf{w}}(F),
\]
where the sum is over all vectors $\mathbf{w}=(w_1,\dots,w_k)$ such that $0\leq w_i \leq \ell-1$
and
\begin{equation}\label{eq:chiw}
\chi_{\mathbf{w}}(F) = \prod_{i=1}^k \chi_{x_i}^{w_i}(F).
\end{equation}
This is a Dirichlet character of modulo $\prod (X-x_i)$ and is non-trivial if $\mathbf{w} \neq \vec{0}$.
Thus we get a decomposition of $\G_k(u)$,
\begin{align}\label{eq:factorization_generating_function}
\G_k(u) = \frac{1}{\ell^k}\sum_{\mathbf{w}} \left(\prod_{i=1}^k \epsilon_i^{-w_i}\right) \G_{\mathbf{w}}(u),
\end{align}
where
\begin{equation}\label{Gw}
\G_{\mathbf{w}}(u) = \sum_{f_1,\dots, f_{\ell-1}} \I_{\infty}(f_1\cdots f_{\ell-1}) \chi_{\mathbf{w}}(F_{\mathbf{v}_0}) u^{\deg(f_1...f_{\ell-1})}.
\end{equation}

\subsection{Euler Product}
Recall that a nonzero multivariate function $\psi$ in  is called \emph{firmly multiplicative} if
\[
\psi(a_1b_1,\ldots, a_rb_r)=\psi(a_1,\ldots, a_r)\psi(b_1,\ldots, b_r)
\]
whenever $\gcd(a_i,b_i)=1$, for all $i=1,\ldots, r$. These functions are determined by their values on $(1,\ldots, 1, p^\alpha,1\ldots, 1)$ and behave as multiplicative functions in one variable. We assume that the reader is familiar with the standard properties of firmly multiplicative functions, or at least that the reader may complete the details of how those are derived from the one variable case; if this is not the case, one may consult the survey paper \cite{Toth}.

\begin{prop}
As usual, let $K=\Ff_q(X) = \Ff_q(\Pp^1_{\Ff_q})$ and $K'=\Ff_{q^{n_q}}(X)$. For each prime $P$ of $K$ we fix a prime $\mathfrak{P}$ of $K'$ lying above $P$. Then we have the Euler decomposition
\begin{align}\label{eulprod}
\G_{\mathbf{w}}(u) = \prod_{\substack{P \\ n_q|\deg(P)}} \left(1+\sum_{j=1}^{\ell-1}\chi^{j}_{\mathbf{w}}(\mathfrak{P})u^{n_q \deg(\frak P)}\right),
\end{align}
with $\G_{\mathbf{w}}$ and $\chi_{\mathbf{w}}$ as defined in \eqref{Gw} and \eqref{eq:chiw}.
\end{prop}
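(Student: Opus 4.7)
The plan is to view the sum defining $\G_{\mathbf{w}}(u)$ as parameterized by the role each prime plays among the $f_i$'s. The indicator factor $\I_\infty(f_1\cdots f_{\ell-1})$ vanishes unless $f_1\cdots f_{\ell-1}$ is squarefree and every prime divisor has degree divisible by $n_q$, so on the nonzero support the $f_i$'s are pairwise coprime squarefree polynomials. A tuple $(f_1,\dots,f_{\ell-1})$ in this support is then equivalent to a finitely-supported map $P\mapsto i_P\in \{0,1,\dots,\ell-1\}$ on the primes $P\in\Ff_q[X]$ with $n_q\mid\deg P$, where $i_P=j>0$ encodes $P\mid f_j$ and $i_P=0$ encodes that $P$ divides none of the $f_j$'s. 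Both $\I_\infty(f_1\cdots f_{\ell-1})$ and $u^{\deg(f_1\cdots f_{\ell-1})}$ factor cleanly as products over such primes, so the task reduces to computing the character factor $\chi_{\mathbf{w}}(F_{\mathbf{v}_0})$ prime-by-prime.

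For a prime $P$ with $i_P=i\in \{1,\dots,\ell-1\}$, let $P=\mathfrak{P}_1\cdots\mathfrak{P}_{n_q}$ be the $\mu_\ell$-stable factorization from Lemma~\ref{basefieldlem}, normalized so that $\mathfrak{P}_{j+1}=\phi(\mathfrak{P}_j)$. By \eqref{F_v}, the prime $P$ contributes the factor $\prod_{j=1}^{n_q}\mathfrak{P}_j^{iv_j}$ to $F_{\mathbf{v}_0}$ (since only $f_{i,j}$ contains $\mathfrak{P}_j$ among the stable factors $f_{i',j'}$). From \eqref{eq:phiqx} it follows that $\chi_x(\phi(G))=\chi_x(G)^q$ for every $x\in\Ff_q$ and $G\in\Ff_{q^{n_q}}[X]$, so $\chi_{\mathbf{w}}(\mathfrak{P}_j)=\chi_{\mathbf{w}}(\mathfrak{P}_1)^{q^{j-1}}$. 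Combining this with the congruence $v_j\equiv q^{1-j}\pmod\ell$, which gives $q^{j-1}v_j\equiv 1\pmod\ell$, and with $\chi_{\mathbf{w}}^\ell=1$, one obtains
\[
\chi_{\mathbf{w}}\Bigl(\prod_{j=1}^{n_q}\mathfrak{P}_j^{iv_j}\Bigr)=\chi_{\mathbf{w}}(\mathfrak{P}_1)^{i\sum_{j=1}^{n_q} q^{j-1}v_j}=\chi_{\mathbf{w}}(\mathfrak{P}_1)^{in_q}.
\]

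Summing over the allowed roles of $P$, its local factor becomes $1+\sum_{i=1}^{\ell-1}\chi_{\mathbf{w}}^{in_q}(\mathfrak{P}_1)u^{\deg P}$. Because $\gcd(n_q,\ell)=1$, the map $i\mapsto in_q\bmod\ell$ permutes $\{1,\dots,\ell-1\}$, so the inner sum equals $\sum_{j=1}^{\ell-1}\chi_{\mathbf{w}}^j(\mathfrak{P})$ with $\mathfrak{P}:=\mathfrak{P}_1$; since $\deg P=n_q\deg\mathfrak{P}$, multiplying the local factors over all primes $P$ of $K$ with $n_q\mid\deg P$ yields the claimed identity. I expect the main technical obstacle to be the Frobenius-twist identity $\chi_x\circ\phi=\chi_x^q$ together with the collapse $\sum_{j=1}^{n_q}q^{j-1}v_j\equiv n_q\pmod\ell$; a secondary bookkeeping point to check is that the constant leading coefficient $b'\in\Ff_{q^{n_q}}^*$ of $F_{\mathbf{v}_0}$, which a priori contributes an overall multiplicative $\chi_{\mathbf{w}}(b')$ factor via evaluation at the $x_s$'s, is absorbed into the conventions used in \eqref{eulprod}.
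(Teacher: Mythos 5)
Your proof takes essentially the same route as the paper's: you reduce to local factors at each prime (the ``map $P\mapsto i_P$'' picture is the paper's firmly multiplicative decomposition phrased differently), then use the Frobenius twist $\chi_x\circ\phi=\chi_x^q$ together with $q^{j-1}v_j\equiv 1\pmod{\ell}$ to collapse the exponent to $in_q$, and finally re-index via $\gcd(n_q,\ell)=1$.

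Your closing caveat about the constant $b'$ is, however, a genuine catch rather than a harmless bookkeeping worry. With $F_{\mathbf{v}_0}$ as defined in \eqref{eq:DefFv}, the tuple $(f_1,\dots,f_{\ell-1})=(1,\dots,1)$ contributes $\chi_{\mathbf{w}}(F_{\mathbf{v}_0})=\chi_{\mathbf{w}}(b')=\chi_\ell(b')^{\sum_i w_i}$ to the $u^0$ coefficient of $\G_{\mathbf{w}}(u)$, whereas the Euler product on the right of \eqref{eulprod} has constant term $1$; equivalently, the function $\Psi(f_1,\dots,f_{\ell-1})=\I_\infty(\prod f_i)\chi_{\mathbf{w}}(F_{\mathbf{v}_0})$ fails firm multiplicativity by exactly a factor $\chi_{\mathbf{w}}(b')$ at each split. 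The correct identity thus carries an overall prefactor $\chi_{\mathbf{w}}(b')$ (or equivalently the definition of $F_{\mathbf{v}_0}$ used in \eqref{Gw} should omit the $b_i$'s). This is absorbed downstream by the substitution $\epsilon_i\mapsto\epsilon_i\chi_\ell(b')^{-1}$ in \eqref{eq:factorization_generating_function}, so the asymptotics \eqref{Fsize1}, which are explicitly noted to be independent of the $\epsilon_i$'s, are unaffected; but as stated the Proposition needs this correction, and your instinct to flag it was right.
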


\begin{rem}\label{rem:sumchi}
If $\frak P$ and $\frak P'$ are two primes over $P$, then $\phi^k(\frak P) = \frak P'$ for some $k$, and so by \eqref{eq:phiqx}, the inner sum in \eqref{eulprod} equals to the same sum with $\frak P'$ replacing $\frak P$.
\end{rem}

\begin{proof}
The function
\[
\Psi (f_1,\ldots, f_{\ell-1}) = \I_{\infty}(f_1\cdots f_{\ell-1}) \chi_{\mathbf{w}}(F_{\mathbf{v}_0})
\]
is firmly multiplicative, hence to compute the Euler  decomposition of
$\G_{\mathbf{w}}(u)$, it suffices to evaluate $\Psi$ on
\[
e_{i}(P^{\nu})=(1,\ldots, 1,P^\nu,1\ldots, 1),
\]
with $P^\nu$ appearing in the $i$-th place and $P$ is a prime polynomial in $K$.

Since $\I_{\infty}(P^\nu)=1$ only when $\nu=1$ and $n_q\mid \deg(P)$ (and $=0$ otherwise), we may restrict to this case.

By Corollary~\ref{basefieldcor}, $P$ has a $\mu_{\ell}$-stable factorization
\[
P = \mathfrak{P}_1 \cdots \mathfrak{P}_{n_q},
\]
in which we may assume without loss of generality that $\mathfrak{P}_1=\mathfrak{P}$.
We note that as $P$ is prime in $K$, the $\mathfrak{P}_i$ are prime in $K'$.
Then $F = P^{i}$, $F_{j} = \frak P_j^i$, and
\[
F_{\mathbf{v}_0}=\prod_{j=1}^{n_q} \frak P_j^{iv_j},
\]
with $v_{j}\equiv q^{1-j} \mod \ell$. By \eqref{eq:phiqx},  $\frak P_j^{iv_j}(x_r)$ equals to  $\frak  P(x_i)^{i}$ up to $\ell$-th-powers. As $\chi_{\mathbf{w}}$ is defined modulo $\prod (X-x_i)$ and is trivial on $\ell$-th-powers, we conclude that
\[
\Psi(e_i(P)) = \chi_{\mathbf{w}} (F_{\mathbf{v}_0}) = \chi_{\mathbf{w}}^{in_q}(\frak P).
\]
Thus we get the following Euler  decomposition:
\[
\G_{\mathbf{w}}(u) =
 \prod_{\substack{P \\ n_q|\deg(P)}} \left(1+\sum_{i=1}^{\ell-1}\chi^{in_q}_{\mathbf{w}}(\mathfrak{P})u^{\deg(P)}\right).
\]
By Lemma~\ref{basefieldlem}, $\deg (P) = n_q \deg (\mathfrak P)$. Thus, since $(n_q,\ell)=1$, we have that $\sum_{i=1}^{\ell-1}\chi^{in_q}_{\mathbf{w}}=\sum_{j=1}^{\ell-1}\chi^{j}_{\mathbf{w}}$, and the proof is done.
\end{proof}

\subsection{Analytic Continuation}\label{analcont}

For a non-trivial character $\chi$ of $\Ff_{q^{n_q}}[X] \subseteq K'$ we define
\begin{equation}\label{eq:Lfn}
L_{K'}(u,\chi) = \prod_{\mathfrak{P}} \left( 1-\chi(\mathfrak{P})u^{\deg(\mathfrak{P})} \right)^{-1},
\end{equation}
where the product is over all prime polynomials in $K'$. Then, $L_{K'}(u,\chi)$ is a polynomial (due to the orthogonality relations) and its zeros lie on the circle $|u|=q^{-n_q/2}$ (due to the Riemann Hypothesis for curves).
We shall use repeatedly that a product
\begin{equation}\label{eq:radiusconvegence}
\prod_{P}(1+O(u^{c\deg P}))
\end{equation}
over prime polynomials absolutely converges in the disc $|u|<q^{-1/c}$.

\begin{prop}\label{analcontprop}
For any $\mathbf{w}$, there exists a non-vanishing function $H_{\mathbf{w}}(u)$ which is analytic in the open disc $|u|<q^{-1/2}$ such that
\begin{enumerate}
\item if $\mathbf{w}\neq \vec{0}$, then
\[
\G_{\mathbf{w}}(u) = \prod_{j=1}^{\ell-1}\left(\frac{L_{K'}(u^{n_q},\chi^j_{\mathbf{w}})}{L_{K'}(u^{2n_q},\chi^{2j}_{\mathbf{w}})}\right)^{\frac{1}{n_q}} H_{\mathbf{w}}(u).
\]
\item if $\mathbf{w} = \vec{0}$, then
\[
\G_{\vec{0}}(u) = \prod_{j=0}^{n_q-1}\left(1-\zeta_{n_q}^jqu \right)^{-\frac{\ell-1}{n_q}} H_{\vec{0}}(u),
\]
and $H_{\vec{0}}(\zeta_{n_q} u)=   H_{\vec{0}}(u)$.
\end{enumerate}
In particular,
\begin{enumerate}
\item[(3)]
$\G_{\mathbf{w}}(u)$ has a meromorphic continuation to $|u|<q^{-1/2}$ which is analytic if $\mathbf{w}\neq \vec0$ and has poles of order $\frac{\ell-1}{n_q}$ at $u=(\zeta_{n_q}^jq)^{-1}$, $j=0,\ldots, n_q-1$ if $\mathbf{w}=\vec{0}$.
\end{enumerate}
\end{prop}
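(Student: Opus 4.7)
The plan is to compare the Euler product \eqref{eulprod} for $\G_{\mathbf{w}}$ with an Euler product for $\prod_{j=1}^{\ell-1}L_{K'}(u^{n_q},\chi_{\mathbf{w}}^j)/L_{K'}(u^{2n_q},\chi_{\mathbf{w}}^{2j})$ and show that the logarithm of the ratio converges absolutely on $|u|<q^{-1/2}$. First, using the telescoping identity $(1-\chi^2(\mathfrak{P})T^2)/(1-\chi(\mathfrak{P})T)=1+\chi(\mathfrak{P})T$, one obtains
\[
\prod_{j=1}^{\ell-1}\frac{L_{K'}(u^{n_q},\chi_{\mathbf{w}}^j)}{L_{K'}(u^{2n_q},\chi_{\mathbf{w}}^{2j})}=\prod_{\mathfrak{P}}\prod_{j=1}^{\ell-1}\bigl(1+\chi_{\mathbf{w}}^j(\mathfrak{P})u^{n_q\deg\mathfrak{P}}\bigr).
\]
I would then group primes of $K'$ by the primes $P$ of $K$ below them via Lemma~\ref{basefieldlem}, together with the identity $\chi_{\mathbf{w}}(\phi(\mathfrak{P}))=\chi_{\mathbf{w}}(\mathfrak{P})^q$ from \eqref{eq:phiqx}. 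Since multiplication by $q^i$ permutes $\{1,\dots,\ell-1\}$ mod $\ell$, for each $P$ with $m=\gcd(n_q,\deg P)$ primes above and $a=\chi_{\mathbf{w}}(\mathfrak{P})$, the local $L$-contribution equals $\bigl(\prod_{j=1}^{\ell-1}(1+a^ju^{n_q\deg P/m})\bigr)^m$.

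Next I would compare prime-by-prime with the $\G_{\mathbf{w}}$-local factor. For $P$ with $n_q\mid\deg P$ (so $m=n_q$), the $\G_{\mathbf{w}}$-factor is $1+(\ell-1)u^{\deg P}$ if $a=1$ and $1-u^{\deg P}$ otherwise, while the $L$-side raised to $1/n_q$ contributes $\prod_{j=1}^{\ell-1}(1+a^ju^{\deg P})$, which equals $(1+u^{\deg P})^{\ell-1}$ or $(1+u^{\ell\deg P})/(1+u^{\deg P})$ respectively. A short Taylor expansion in each case shows the logarithm of the $\G$/$L$-factor ratio is $O(u^{2\deg P})$. For $P$ with $n_q\nmid\deg P$ the $\G_{\mathbf{w}}$-factor is $1$ while the $L$-contribution involves $u^{n_q\deg P/m}$ with $n_q/m\geq 2$, so its logarithm is also $O(u^{2\deg P})$. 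Working throughout at the level of logarithms rigorously interprets the formal $1/n_q$-th root, since both $\G_{\mathbf{w}}$ and the $L$-ratio equal $1$ at $u=0$.

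Summing these local estimates and using that the number of primes of degree $n$ in $\Ff_q[X]$ is $O(q^n/n)$, the total difference is bounded by $O\bigl(\sum_n q^n|u|^{2n}/n\bigr)$, which converges absolutely on $|u|<q^{-1/2}$ as in \eqref{eq:radiusconvegence}. Exponentiating yields the sought analytic non-vanishing function $H_{\mathbf{w}}(u)$ on this disc. For $\mathbf{w}\neq\vec{0}$ the characters $\chi_{\mathbf{w}}^j$ are non-trivial, so each $L_{K'}(u^{n_q},\chi_{\mathbf{w}}^j)$ is a polynomial with zeros on $|u|=q^{-1/2}$ by the Riemann Hypothesis for curves, yielding analyticity of the whole factorization on $|u|<q^{-1/2}$ and proving (1) and the corresponding part of (3).

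For $\mathbf{w}=\vec{0}$ the character is trivial, so the $L$-function framework formally breaks; instead I would expand $\log\G_{\vec{0}}(u)$ directly. The main term is $(\ell-1)\sum_{P:\,n_q\mid\deg P}u^{\deg P}$, which via the standard identity $\sum_P u^{\deg P}=-\sum_{e\geq 1}\frac{\mu(e)}{e}\log(1-qu^e)$ and the orthogonality $[n_q\mid\deg P]=\frac{1}{n_q}\sum_{j=0}^{n_q-1}\zeta_{n_q}^{j\deg P}$ decomposes into an $e=1$ polar piece $-\frac{\ell-1}{n_q}\log\prod_{j=0}^{n_q-1}(1-q\zeta_{n_q}^ju)$ plus an $e\geq 2$ tail that is analytic on $|u|<q^{-1/2}$. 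Exponentiating proves (2). The symmetry $H_{\vec{0}}(\zeta_{n_q}u)=H_{\vec{0}}(u)$ is immediate since both $\G_{\vec{0}}(u)$ and the polar factor are invariant under $u\mapsto\zeta_{n_q}u$. The main technical obstacle throughout is making sense of the fractional power $(\cdot)^{1/n_q}$, which is addressed uniformly by passing to logarithms.
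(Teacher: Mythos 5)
Your proposal is correct, and it follows essentially the same strategy as the paper: both use the telescoping identity $(1-\chi^2T^2)/(1-\chi T)=1+\chi T$ to relate the Euler product of $\G_{\mathbf{w}}$ to the ratio $\prod_j L_{K'}(u^{n_q},\chi^j_{\mathbf{w}})/L_{K'}(u^{2n_q},\chi^{2j}_{\mathbf{w}})$, both group the primes of $K'$ over a prime $P$ of $K$ via Lemma~\ref{basefieldlem}, both use the invariance of the local sum under $\mathfrak{P}\mapsto\phi(\mathfrak{P})$ (Remark~\ref{rem:sumchi}), and both observe that the residual local factors are $1+O(u^{2\deg P})$ so that the correction converges on $|u|<q^{-1/2}$ by \eqref{eq:radiusconvegence}. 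The only noteworthy organizational difference is that you handle the $1/n_q$-th power by passing uniformly to logarithms and comparing local log-ratios prime by prime, whereas the paper works multiplicatively: it writes $\G_{\mathbf{w}}(u)^{n_q}$ as a product over all $\mathfrak{P}$ lying over primes $P$ with $n_q\mid\deg P$, then factors $\G_{\mathbf{w}}=F_1F_2$ with $F_1$ (containing the primes with $n_q\nmid\deg P$, raised to $-1/n_q$) analytic by \eqref{eq:radiusconvegence} and $F_2$ (the full product over all $\mathfrak{P}$, raised to $1/n_q$) producing the $L$-ratio. For $\mathbf{w}=\vec 0$, you invoke the Möbius-inverted identity $\sum_P u^{\deg P}=-\sum_e\tfrac{\mu(e)}{e}\log(1-qu^e)$, while the paper simply recognizes $\prod_P(1-(\zeta_{n_q}^j u)^{\deg P})=1-q\zeta_{n_q}^j u$ directly; the effect is the same. (Your closed-form $(1+T^\ell)/(1+T)$ for $\prod_{j=1}^{\ell-1}(1+a^jT)$ with $a$ primitive is valid only for $\ell$ odd, but $\ell=2$ is excluded by the standing hypothesis $q\not\equiv 0,1\bmod\ell$, so this causes no gap.)
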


\begin{proof}
By the Riemann Hypothesis, the poles and zeros of $\frac{L_{K'}(u^{n_q},\chi^j_{\mathbf{w}})}{L_{K'}(u^{2n_q},\chi^{2j}_{\mathbf{w}})}$ lies on the circles $|u|=q^{-1/4}$ and $|u|=q^{-1/2}$, respectively, hence it has an analytic $n_q$-th root in the open disc $|u|< q^{-1/2}$ and so (3) follows from (1) and (2).

Now we prove (1).
Since each irreducible $P$ in $\Ff_q[X]$ with $n_q \mid \deg (P)$ has $n_q$ prime divisors in $\Ff_{q^{n_q}}[X]$ (Lemma~\ref{basefieldlem}) we get by \eqref{eulprod} applied to all the primes dividing $P$ that
\[
\G_{\mathbf{w}}(u)^{n_q}  = \prod_{\substack{P \\ n_q\mid \deg(P)}} \prod_{\frak P\mid P} \left(1+\sum_{j=1}^{\ell-1}\chi^{j}_{\mathbf{w}}(\mathfrak{P})u^{n_q\deg(\mathfrak{P})}\right).
\]
We take $n_q$-th root and break this product into two parts: Denote
\[
F_1(u)  = \prod_{{\substack{P \\ n_q\nmid\deg(P)}}}\prod_{\mathfrak{P}|P} \left(1+\sum_{j=1}^{\ell-1}\chi^{j}_{\mathbf{w}}(\mathfrak{P})u^{n_q\deg(\mathfrak{P})}\right)^{-\frac{1}{n_q}}
\]
and
\[
F_2(u) = \prod_{\mathfrak{P}} \left(1+\sum_{j=1}^{\ell-1}\chi^{j}_{\mathbf{w}}(\mathfrak{P})u^{n_q\deg(\mathfrak{P})}\right)^{\frac{1}{n_q}}
\]
so
\begin{align}\label{factorization}
\G_{\mathbf{w}}(u)  = F_1(u)F_2(u).
\end{align}
We study each of the factors starting with $F_1$. By Lemma~\ref{basefieldlem} and Remark~\ref{rem:sumchi} for $|u|<q^{-1/2}$,
\begin{align*}
F_1(u) &=  \prod_{\substack{P \\ n_q\nmid\deg(P)}} \left(1+\sum_{j=1}^{\ell-1}\chi^{j}_{\mathbf{w}}(\mathfrak{P})u^{\frac{n_q}{(\deg(P),n_q)}\deg(P)}\right)^{-\frac{(\deg(P),n_q)}{n_q}}.
\end{align*}
Since for $n_q\nmid \deg P$ we have
\[
\left(1 + \chi^{j}_{\mathbf{w}}(\mathfrak{P})u^{\frac{n_q}{(\deg(P),n_q)}\deg(P)}\right)^{-\frac{(\deg(P),n_q}{n_q}}=1+O(q^{2\deg P}),
\]
by \eqref{eq:radiusconvegence}, we get that $F_1(u)$ is analytic in this open disc $|u|<q^{-1/2}$.

Next we study $F_2$. Expanding the product in its definition we get
\begin{equation}\label{eqF2}
F_2(u) = \prod_{j=1}^{\ell-1} \prod_{\mathfrak{P}} \left(1 + \chi^{j}_{\mathbf{w}}(\mathfrak{P})u^{n_q\deg(\mathfrak{P})}\right)^{\frac{1}{n_q}}  \prod_{\mathfrak{P}} \left(1 + O(u^{2 n_q \deg (\mathfrak P)}) \right)
\end{equation}
By \eqref{eq:radiusconvegence}, the right product absolutely  converges in $|u|<q^{-1/2}$ and  by \eqref{eq:Lfn},
\[
\prod_{\mathfrak{P}} \left(1 + \chi^{j}_{\mathbf{w}}(\mathfrak{P})u^{n_q\deg(\mathfrak{P})}\right) = \frac{L_{K'}(u^{n_q},\chi^j_{\mathbf{w}})}{L_{K'}(u^{2n_q},\chi^{2j}_{\mathbf{w}})},
\]
which together with the analyticity of $F_1$, with \eqref{eqF2}, and with \eqref{factorization} finishes the proof of (1).

For (2), we use \eqref{eulprod}, to get that
\[
\G_{\vec{0}}(u)  = \prod_{\substack{P \\ n_q|\deg(P)}} \left(1+(\ell-1)u^{\deg(P)}\right).
\]
By the orthogonality  relations $1_{\{n\equiv 0\mod n_q\}} = \frac{1}{n_q} \sum_{j=0}^{n_q-1} \zeta_{n_q}^{jn}$ we have
\[
\G_{\vec{0}}(u) = \prod_{P} \Big(1 + \frac{\ell-1}{n_q} \sum_{j=0}^{n_q-1}\zeta_{n_q}^{j\deg(P)} u^{\deg(P)} \Big).
\]
Then we define $H_{\vec{0}} (u)$ by the equation
\begin{align*}
\G_{\vec0}(u) &= \prod_{j=0}^{n_q-1}\left( \prod_{P} \left(1 - (\zeta_{n_q}^ju)^{\deg(P)}\right)\right)^{-\frac{\ell-1}{n_q}} H_{\vec{0}}(u)\\
& = \prod_{j=0}^{n_q-1}\left(\frac{1}{1-q\zeta_{n_q}^ju} \right)^{\frac{\ell-1}{n_q}} H_{\vec{0}}(u).
\end{align*}
As $H_{\vec{0}}(u) = \prod_{P} (1+O(u^{2\deg(P)}))$ holds true,  \eqref{eq:radiusconvegence} implies that $H_{\vec{0}}(u)$ is analytic in $|u|<q^{-1/2}$. Further, $H_{\vec{0}}(\zeta_{n_q}u) = H_{\vec{0}}(u)$ since both $\G_{\mathbf{w}}$ and $\prod_{j=0}^{n_q-1}\left(\frac{1}{1-q\zeta_{n_q}^ju} \right)^{\frac{\ell-1}{n_q}}$ satisfy this relation.
\end{proof}

\subsection{Computing $|\F^k_{n_q}(D)|$}

We determine the size of the set $\F^k_{n_q}(D)$ using the residue theorem applied to the contour $C_{\epsilon} = \{u : |u|=q^{-1/2-\epsilon}\}$ and to the function $\G_k(u)/u^{D+1}$. By Proposition~\ref{analcontprop}, by \eqref{generatfun}, and by \eqref{eq:factorization_generating_function} we have
\begin{equation}\label{eq:contourint}
\begin{split}
\frac{1}{2\pi i} \oint_{C_{\epsilon}}\frac{\G_k(u)}{u^{D+1}} du & = \Res_{u=0} \left(\frac{\G_k(u)}{u^{D+1}}\right) + \sum_{j=0}^{n_q-1} \Res_{u=(\zeta_{n_q}^jq)^{-1}} \left(\frac{\G_k(u)}{u^{D+1}}\right)\\
& = |\F^k_{n_q}(D)| +\frac{1}{\ell^k} \sum_{j=0}^{n_q-1}\Res_{u=(\zeta_{n_q}^jq)^{-1}} \left(\frac{\G_{\vec{0}}(u)}{u^{D+1}}\right).
\end{split}
\end{equation}

We compute each of the terms separately.
Put $N=\frac{\ell-1}{n_q}-1$ and $u_j = (\zeta_{n_q}^jq)^{-1}$. Since $\G_{\vec0}$ has a pole at $u=u_j$ of order $N+1$ and using Proposition~\ref{analcontprop}, we have

\begin{align}\label{zerosatnonzeropoints}
\begin{split}
N!\cdot\Res_{u=u_j} \left(\frac{\G_{\vec{0}}(u)}{u^{D+1}}\right)
	& = \frac{d^N}{du^N} \left[ (u-u_j)^{N+1}\frac{\G_{\vec{0}}(u)}{u^{D+1}} \right] \bigg|_{u=u_j}\\
	& = \frac{d^{N}}{du^{N}}\Bigg[ \frac{u_j^{N+1}}{u^{D+1}} \prod_{\substack{i=0 \\ i\not=j}}^{n_q-1}u_i^{N+1}\left(u_i-u\right)^{-(N+1)} H_{\vec{0}}(u)\Bigg]\Bigg|_{u=u_j}\\
	&= \sum_{m=0}^{N} \binom{N}{m} \frac{d^m}{du^m} \frac{u_j^{N+1}}{u^{D+1}}\Bigg|_{u=u_j}  \frac{d^{N-m}}{du^{N-m}} \Bigg[\prod_{\substack{i=0 \\ i\not=j}}^{n_q-1}\left(\frac{1}{1-u/u_i}\right)^{N+1} H_{\vec{0}}(u)\Bigg]\Bigg|_{u=u_j} \\
= & -\frac{1}{n_q}P(D) u_j^{-D}= -\frac{1}{n_q} P(D)q^D\zeta_{n_q}^{jD},
\end{split}
\end{align}
where $P(D)$ is a polynomial of degree $N$ which is independent of $j$.

Since $\G_k(u)$ is continuous on $C_{\epsilon}$, it is $O(1)$ there, and we have
\[
\left| \frac{1}{2\pi i} \oint_{C_{\epsilon}}\frac{\G_k(u)}{u^{D+1}} du \right| = O\left(q^{(1/2+\epsilon)D}\right).
\]
Plug this bound and  \eqref{zerosatnonzeropoints} in \eqref{eq:contourint} to get
\begin{align}\label{Fsize1}
|\F^k_{n_q}(D)| = \begin{cases} \frac{1}{\ell^k}P(D)q^D + O\left(q^{(1/2+\epsilon)D}\right), & D\equiv 0 \mod{n_q} \\ 0, & \mbox{otherwise}.\end{cases}
\end{align}

We emphasize that the main term is independent on the actual choice of the values $\epsilon_1,\ldots, \epsilon_k$ of the characters. The error term may depend on this choice.

\section{Proof of Theorem~\ref{thm:main}}\label{sec:proof}
Fix $D\equiv 0 \mod n_q$.
Let $N\geq 0$ and  $\epsilon_0,\epsilon_1,\ldots, \epsilon_{q}\in \mu_{\ell}$ such that
\[
\sum_{i=0}^{q} \sum_{w=0}^{\ell-1} \epsilon_i^w = N.
\]

Conditioning on $b$, \eqref{Fsize1} gives that the probability $\pi$ that $\chi_{x_i}(F_{\mathbf{v}_0}) = \epsilon_i$ for $i=1,\ldots, q$ and $\chi_{\ell}(b)=\epsilon_0$ is
\[
\pi = \frac{1}{\ell^{q+1}} + O(q^{D(-1/2+\epsilon)}).
\]
Thus, \eqref{analytic_formula} implies that
\[
\Prob(\#C_{F,b}(\Ff_q) = N) = \sum_{\substack{\epsilon_0,\ldots, \epsilon_q\in \mu_{\ell}\\ \sum_i\sum_w \epsilon_i^w = N}} \frac{1}{\ell^{q+1}} + O(q^{D(-1/2+\epsilon)}).
\]
Putting $\epsilon_i' = \sum_{w=0}^{\ell-1} \epsilon_i^w$, we have that $\epsilon'_i\in \{0,\ell\}$. Moreover,  $\epsilon'_i=0$ if and only if $\epsilon_i\neq 1$. This means that if $\epsilon'_i=0$ there are $\ell-1$ different choices for $\epsilon_i$, whereas if $\epsilon'_{i}=\ell$, then there is one choice: $\epsilon_i=1$. We plug this in the previous equation and get
\[
\begin{split}
\Prob(\#C_{F,b}(\Ff_q) = N)
	&= \sum_{\substack{\epsilon_0',\ldots, \epsilon_q'\in \{0,\ell\}\\ \sum_i\epsilon_i' = N}} \frac{(\ell-1)^{\#\{i:\epsilon'_i=0\}}}{\ell^q} + O(q^{D(-1/2+\epsilon)})\\
	&=\sum_{\substack{\epsilon_0',\ldots, \epsilon_q'\in \{0,\ell\}\\ \sum_i\epsilon_i' = N}} \left(\frac{\ell-1}{\ell}\right)^{\#\{i:\epsilon'_i=0\}}\left(\frac{1}{\ell}\right)^{\#\{i:\epsilon'_i=\ell\}}
	 + O(q^{D(-1/2+\epsilon)})\\
	 &= \Prob(\sum_{i=0}^q X_i = N),
\end{split}
\]
with $X_i$ as in the formulation of the theorem. Since  by Proposition~\ref{H_gtoF_n}
\[
\Prob(\#C_{F,b}(\Ff_q) = N) =\Prob(\#C(\Ff_q)=N)
\] 
with $C$ random curve in $\mathcal{H}_{g,\ell}$, this finishes the proof.
\qed

\bibliography{SecondDraft}
\bibliographystyle{amsplain}

\end{document}